\documentclass[12pt, a4]{amsart}

\usepackage{amsmath,amssymb,cite,mathrsfs,graphicx}

\newtheorem{thm}{Theorem}
\newtheorem{prop}{Proposition}

\newtheorem{obs}{Observation}

\theoremstyle{definition}

\theoremstyle{remark}
\newtheorem{rem}{Remark}

\numberwithin{equation}{section}

\title[On the zeros of the Euler double zeta-function II]
{Numerical computations on the zeros of the Euler double zeta-function II} 
\author{Kohji Matsumoto, Mayumi Sh{\=o}ji}

\begin{document}
\maketitle\thispagestyle{empty}

\begin{abstract}
We study the behavior of zero-sets of the double zeta-function
$\zeta_2(s_1,s_2)$ (and also of more general multiple zeta-function
$\zeta_r(s_1,\ldots,s_r)$).    
In our former paper 
we studied the case
$s_1=s_2$, but in the present paper we consider the more general two variable
situation.   We carry out
numerical computations in order to trace the behavior of zero-sets
of $\zeta_2(s_1,s_2)$.
We observe that some
zero-sets approach the points $(s_1,s_2)$ with $s_2=0$, while other zero-sets approach the
points $(s_1,s_2)$, where $s_2$ are solutions of $\zeta(s_2)=1$.
In the former case, when $s_2$ tends to $0$, we observe that $\Im s_1$ becomes close
to the imaginary part of a non-trivial zero of the Riemann zeta-function.
In the latter case we give a theoretical proof, in the general $r$-fold setting.
\end{abstract}


\renewcommand{\thefootnote}{}
\footnotetext{\hspace*{-.51cm}AMS 1991 subject classification: 
Primary: 11M32, Secondly: 11M35\\
Keywords: double zeta-function, zeros} 

\section{Introduction}\label{sec-1}

The present paper is a continuation of the authors' previous article \cite{MatSho14}
on the study of the zeros of the Euler double zeta-function
\begin{align}\label{1-1}
\zeta_2(s_1,s_2)=\sum_{n_1=1}^{\infty}\sum_{n_2=1}^{\infty}
\frac{1}{n_1^{s_1}(n_1+n_2)^{s_2}},
\end{align}
where $s_1,s_2$ are complex variables.    This double series is convergent absolutely in 
the region defined by $\Re s_1+\Re s_2>2$ and $\Re s_2>1$, and can be continued
meromorphically to the whole complex space $\mathbb{C}^2$ (see \cite{Mat02}).

This is the case $r=2$ of the more general Euler-Zagier $r$-fold sum
\begin{align}\label{1-2}
\zeta_r(s_1,\ldots,s_r)=\sum_{1\leq n_1<\cdots<n_r}\frac{1}{n_1^{s_1}\cdots
n_r^{s_r}},
\end{align}
which has been investigated quite extensively from various aspects.

In order to understand the analytic properties of \eqref{1-2}, it is very important to
study the behavior of its zeros and singularities.
It is already known that possible singularities of \eqref{1-2} are located only on
the set $S_r$, which is the union of hyperplanes in
$\mathbb{C}^r$ defined by 
\begin{align*}
&A_r^{(j,n)}:=\{(s_1,\ldots,s_r)\in\mathbb{C}^r\;|\;s_{r-j+1}+\cdots+s_r=j-n\} \\
&\qquad\qquad\qquad\qquad(n=0,1,2,\ldots,\; 2\leq j\leq r),\\
&A_r^{(1)}:=\{(s_1,\ldots,s_r)\in\mathbb{C}^r\;|\;s_r=1\}
\end{align*}
(see \cite{AET}, \cite{MatJNT}).
However, the distribution of the zeros of \eqref{1-2} has not been, except for the 
classical case of $r=1$ (that is the case of the Riemann zeta-function $\zeta(s)$), 
studied in detail.
    The aim of the present series of papers is to
study the behavior of the zeros of \eqref{1-1}, the simplest case (except for the case
$r=1$), from the viewpoint of numerical computations.

In \cite{MatSho14}, we considered the situation when $s_1=s_2(=s)$.
Then $\zeta_2(s,s)$ is a function of one variable, so we can study the distribution of
the zeros of $\zeta_2(s,s)$ in a way analogous to the case of $\zeta(s)$.
Unlike the case of $\zeta(s)$, the function $\zeta_2(s,s)$ does not satisfy the
analogue of the Riemann hypothesis.   We found a lot of zeros in the strip
$0\leq \Re s\leq 1$ off the line $\Re s=1/2$, or even outside that strip (see \cite[Observation 1]{MatSho14} and 
\cite[Figure 1]{MatSho14}).    We pointed out that the distribution of those zeros is 
similar, not to the case of $\zeta(s)$, but rather, to the case of Hurwitz zeta-functions.


In the present paper, we consider the general situation, when $s_1$ and $s_2$ are
moving independently.    The zeros of $\zeta_2(s_1,s_2)$, as a function of two variables, 
are not isolated points.    They form analytic sets in $\mathbb{C}^2$, which we call 
{\it zero-sets}.   We carry out
numerical computations in order to trace the behavior of such zero-sets.
Mainly we consider the asymptotic situation of those zero-sets
when, for example, $|s_1-s_2|$ becomes large, or $s_2\to 0$, or
$|s_1| \to \infty$.
We observe that some
zero-sets approach the points $(s_1,s_2)$ with $s_2=0$, while other zero-sets approach the
points $(s_1,s_2)$ where $s_2$ are solutions of $\zeta(s_2)=1$.

It seems that, in general, to give theoretical proofs for the observations
described in the present paper is a very difficult task.     
However, at least, we can construct a proof of the
fact that some zero-sets approach the points satisfying $\zeta(s_2)=1$.
In fact, we will prove a more general result for the $r$-fold sum \eqref{1-2}
in the last section.

{\bf Acknowledgment}.
The authors express their sincere gratitude to the referees for their useful comments
and suggestions.

\section{The behavior of zero-sets}

Our basic strategy is to begin with the zeros of $\zeta_2(s,s)$ discovered in
\cite{MatSho14}, and study the behavior of zero-sets of $\zeta_2(s_1,s_2)$
around those zeros.  
  
Let us begin with \cite[Figure 1]{MatSho14}, which we reproduce here as Figure \ref{X1-1}, 
on which a lot of non-real $s$, for which
$\zeta_2(s,s)=0$ holds, are dotted.    The values of some of which are given in the list
written on \cite[pp.308-309]{MatSho14}, whose order is according to the
magnitude of the imaginary parts of them.    The first four of them are
\begin{align*}
&(0.27672860) + i(8.39755368),\\
&(-0.18995147) + i(12.30422130),\\
&(0.06443907) + i(15.02312694),\\
&(-0.53767831) + i(17.58063303).
\end{align*}
Denote those values by
$a_1,a_2,a_3,\ldots$ and so on.
(See Remark \ref{rem-1} below for some corrections of the data.)

\begin{figure}
\begin{center} \leavevmode
\includegraphics[width=0.42\textwidth]{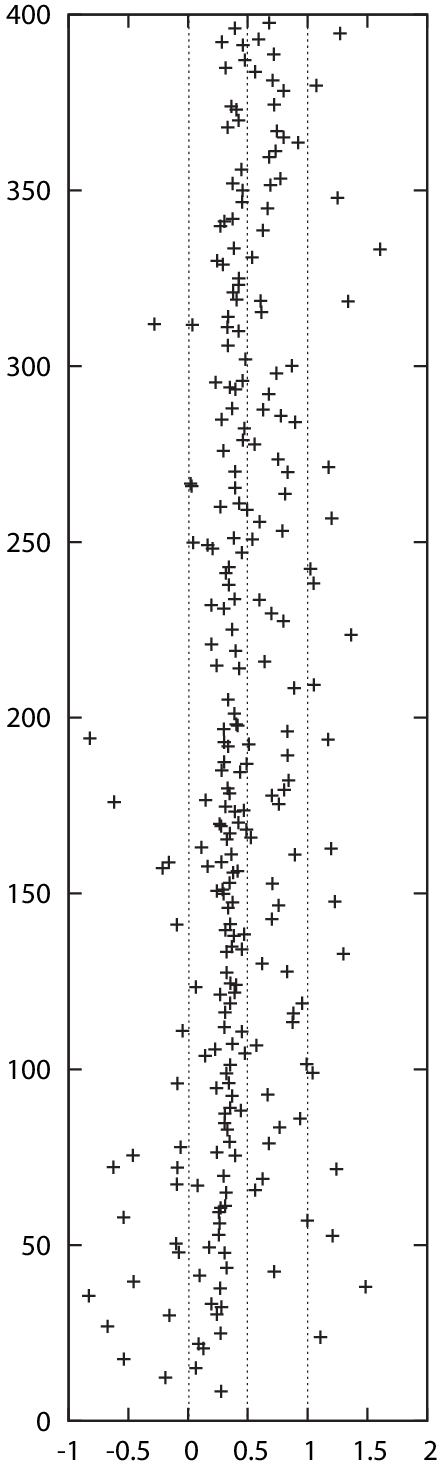} \hskip 1cm
\includegraphics[width=0.42\textwidth]{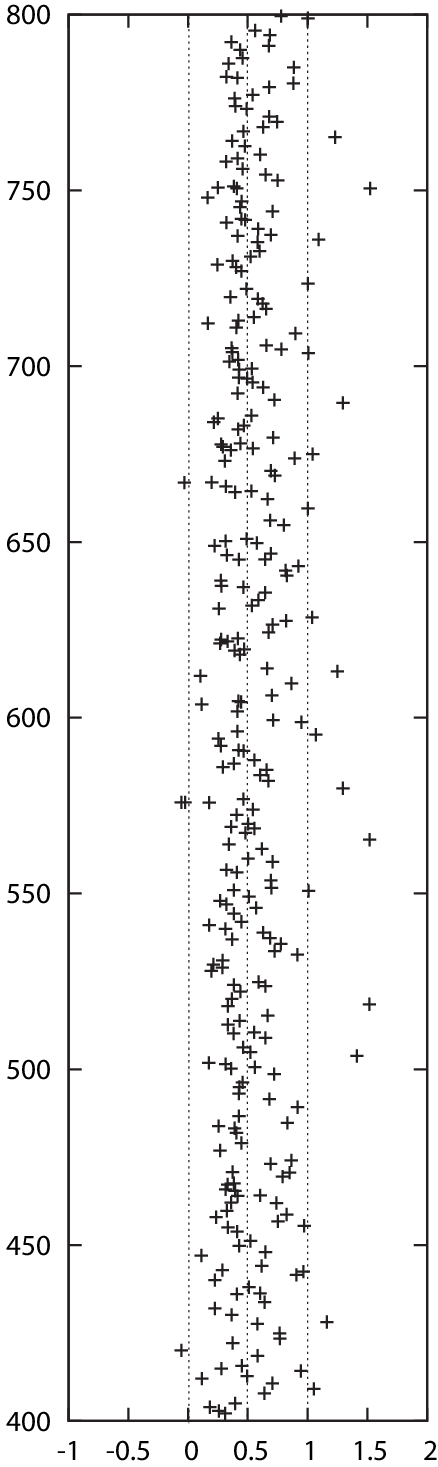}
\end{center}
\caption{The distribution of zeros of $\zeta_2(\sigma+it,\sigma+it)$ for $-1 \leq \sigma \leq 2$ and $0 \leq t \leq 800$. The horizontal axis represents $\sigma$ and the vertical axis does $t$.}
\label{X1-1}
\end{figure}

Now consider the two-variable function $\zeta_2(s_1,s_2)$.
Since $\zeta_2(s_1,s_2)$ cannot have any isolated zero point, 
the points $(a_i,a_i)\in\mathbb{C}^2$ are to be
intersections of some zero-sets and the hyperplane $s_1=s_2$.    Our first aim is
to investigate the behavior of zero-sets near the points $(a_i,a_i)$.

Let $\delta$ be a positive number.    We search for the zeros of $\zeta_2(s_1,s_2)$
around the point $(a_i,a_i)$ under the condition $|s_1-s_2|=\delta$.    
The method of computations is based on the Euler-Maclaurin
formula, explained in \cite[Section 4]{MatSho14}.     
(It is to be noted that the simple method using the harmonic product formula
\cite[(2.1), (2.2)]{MatSho14} cannot be applied to the present situation where
$s_1\neq s_2$.) 
Figure \ref{Fig1-1} describes the loci of the absolute values of zeros 
satisfying $|s_1-s_2|=\delta$ around $(a_i,a_i)$ 
($5\leq i\leq 9$) for various values of $\delta$.
We quote the values of those $a_i$s from \cite{MatSho14}:
\begin{align*}
&a_5=(0.12844956) + i(20.59707674),\\
&a_6=(0.08804454) + i(21.93232180),\\
&a_7=(1.10778631) + i(23.79708697),\\
&a_8=(0.27268471) + i(24.93425087),\\
&a_9=(-0.67413685) + i(26.88584448).
\end{align*}

The left one of Figure \ref{Fig1-1} is the situation when $\delta=0.5$.    In this figure
we can observe that zero-sets around $(a_{7},a_{7})$ and $(a_{8},a_{8})$ become closer to
each other, and it seems that these two zero-sets are connected in the central 
figure, when $\delta=1$.
In the right figure, all the zero-sets around $(a_5,a_5)$ to $(a_8,a_8)$ seem connected. 
When $\delta$ becomes larger, more and more zero-sets seem to be connected with
each other (see Figure \ref{Fig2-1}).


\begin{figure}
\begin{center} \leavevmode
\includegraphics[width=0.32\textwidth]{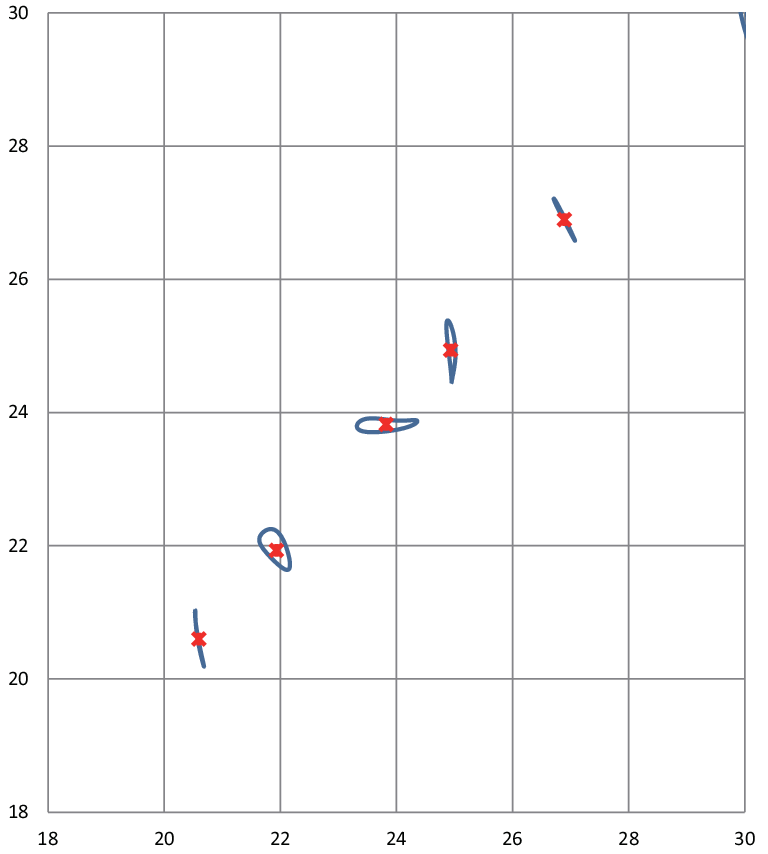}
\includegraphics[width=0.32\textwidth]{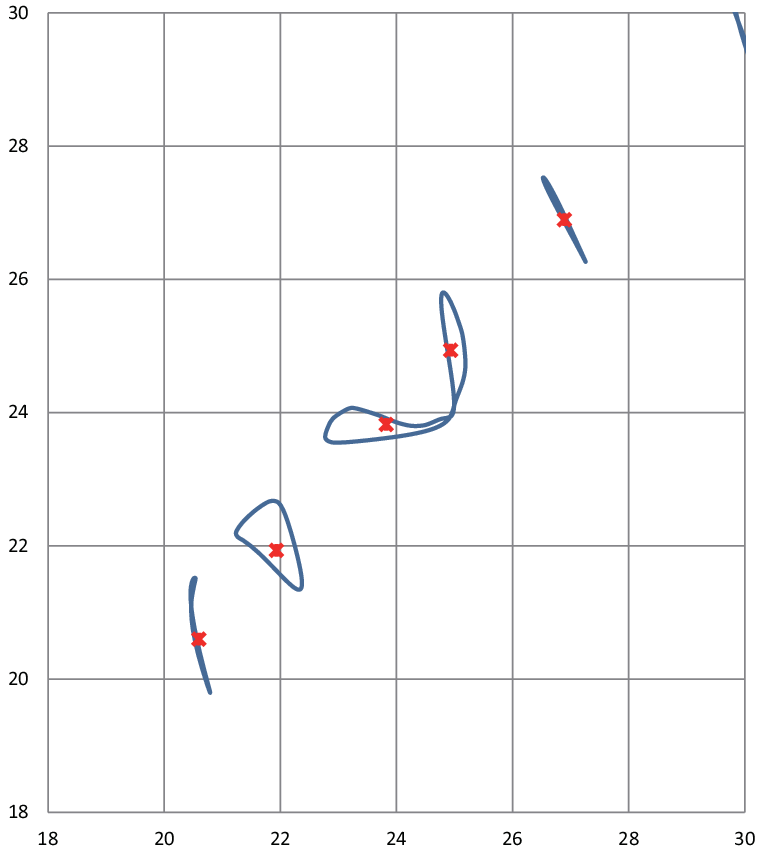}
\includegraphics[width=0.32\textwidth]{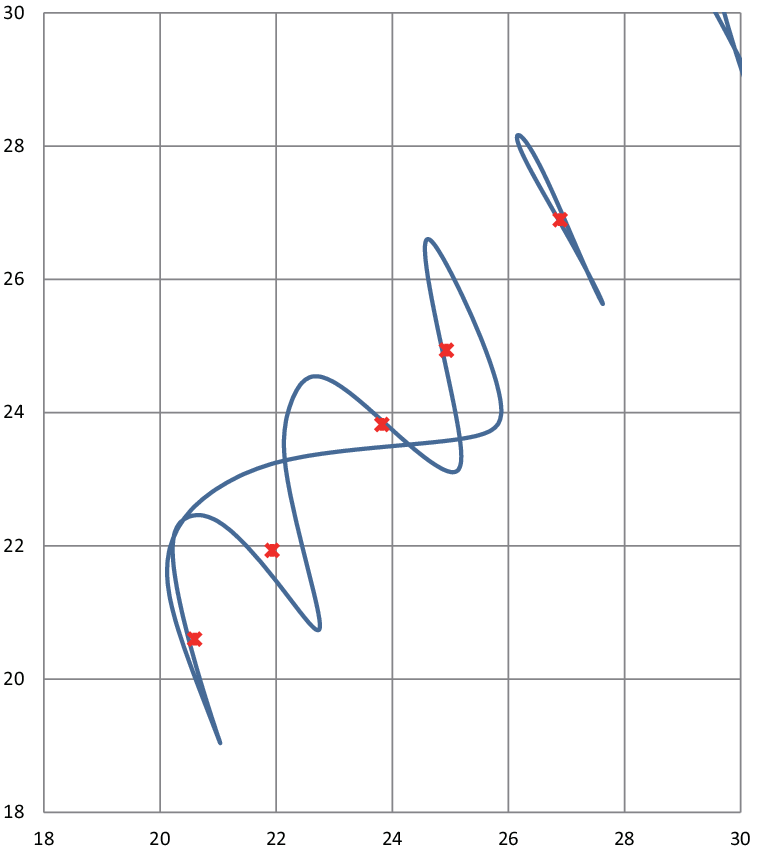}
\end{center}
\caption{The loci of the absolute values of zeros of $\zeta_2(s_1,s_2)$ 
around $(a_i,a_i)$ ($5\leq i\leq 9$) with $|s_1-s_2|=\delta$, where
$\delta=0.5$ (left), $1.0$ (center), $2.0$ (right).   The horizontal axis
represents $|s_1|$ and the vertical axis does $|s_2|$.
The points indicated by the 
cross marks are $(|a_5|,|a_5|), (|a_6|,|a_6|), \ldots, (|a_9|,|a_9|)$ from 
the lower-left to the upper-right. }
\label{Fig1-1}
\end{figure}

\begin{figure}
\begin{center} \leavevmode
\includegraphics[width=0.45\textwidth]{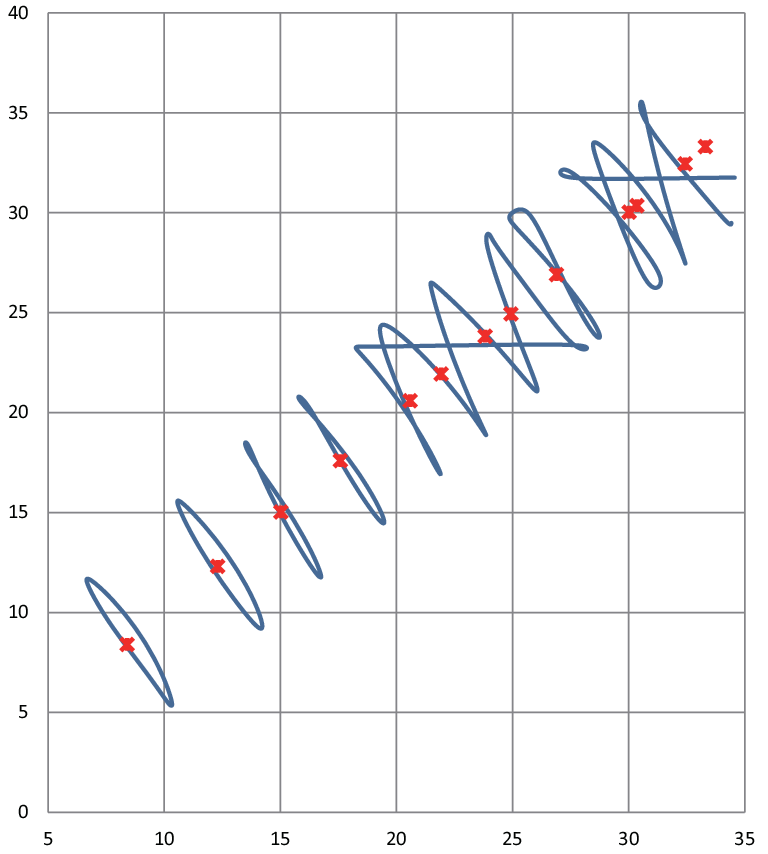}
\includegraphics[width=0.45\textwidth]{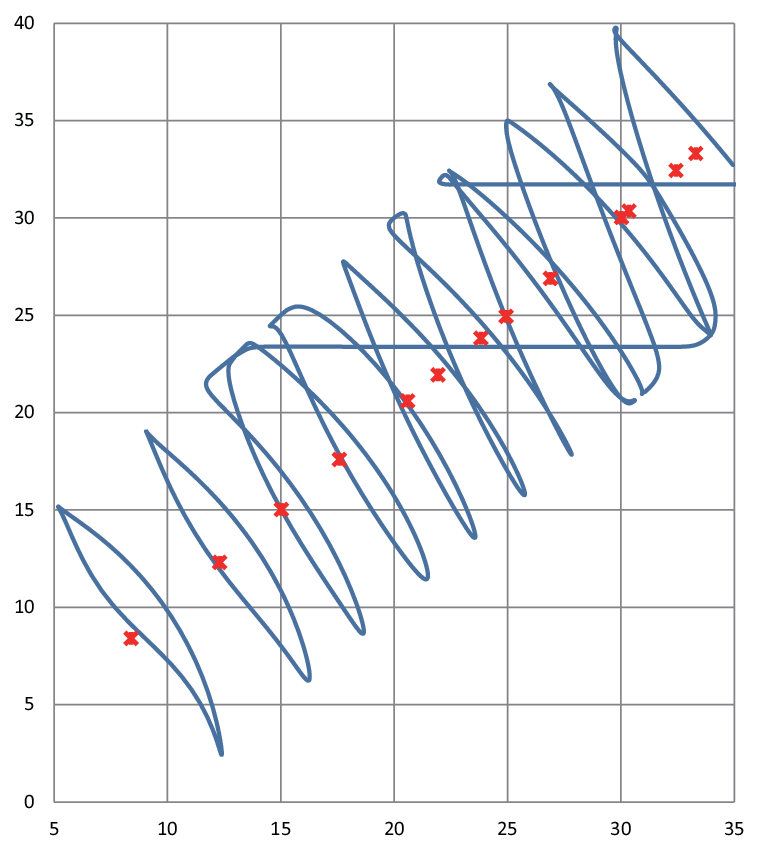}
\end{center}
\caption{The loci of the absolute values of zeros of $\zeta_2(s_1,s_2)$ 
around $(a_i,a_i)$ ($1\leq i\leq 13$) with $|s_1-s_2|=\delta$, where
$\delta=5.0$ (left), and $10.0$ (right).}
\label{Fig2-1}
\end{figure}

\begin{figure}
\begin{center} \leavevmode
\includegraphics[width=0.45\textwidth]{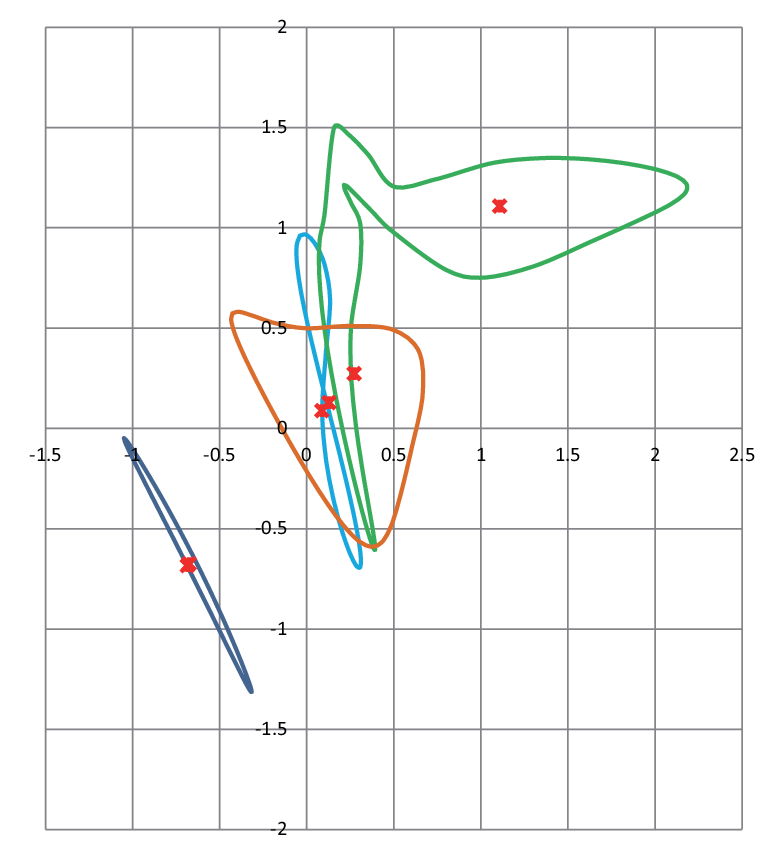}
\includegraphics[width=0.45\textwidth]{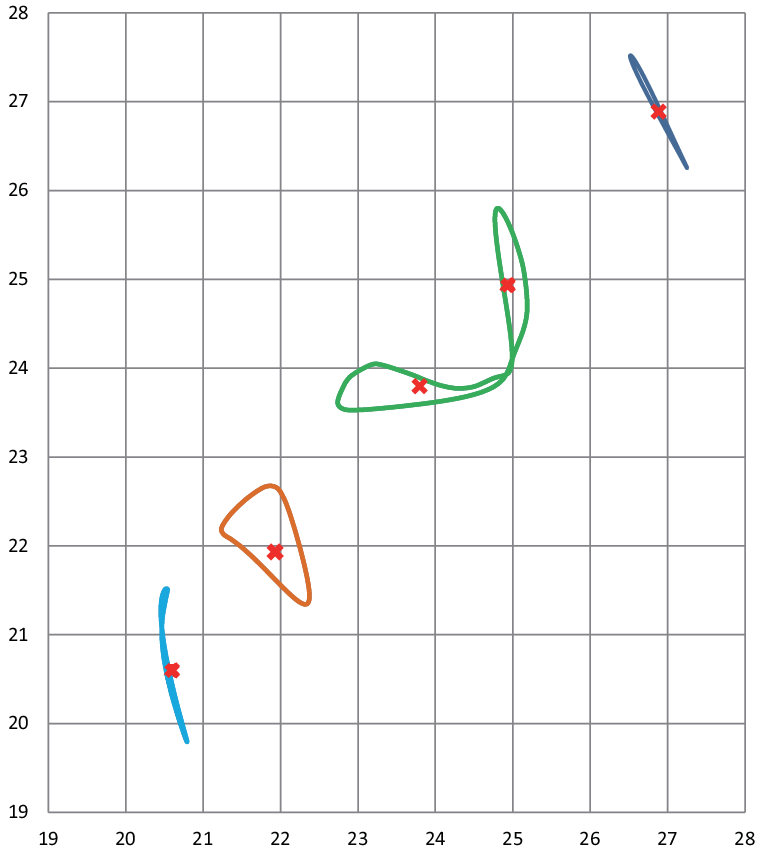}
\end{center}
\caption{The behavior of real parts and imaginary parts of zeros around 
$(a_i,a_i)$ ($5\leq i\leq 9$) when $\delta=1.0$.   On the left figure, the horizontal axis
represents $\Re s_1$, and the vertical axis does $\Re s_2$, while on the right figure 
they represent $\Im s_1$ and $\Im s_2$.   Since the absolute value is almost determined 
by the imaginary part (because the real part is relatively small), the right figure is
 very similar to the central one of Figure 1. }
\label{Fig2-2}
\end{figure}


Since Figure \ref{Fig1-1} only represents the behavior of absolute values, in order to 
make sure 
that the above zero-sets are indeed connected, it is necessary to investigate
the values of real parts and imaginary parts of those.     Figure \ref{Fig2-2} gives 
such data.
This figure shows that the loci of zeros around $(a_7,a_7)$ and 
$(a_8,a_8)$ are indeed connected.

From Figure \ref{Fig2-1} it seems that all zero-sets appearing in this figure are
connected.   (The zero-sets including the points $(a_1,a_1)$ and 
$(a_2,a_2)$ are not connected to
the other zero-sets on the figure, but further computations show that these zero-sets
look connected also, when $\delta$ becomes larger.)

From this observation, perhaps we may expect that all points
$$(a_1,a_1), (a_2,a_2), (a_3,a_3),\ldots$$ 
are lying 
on the same (unique?) zero-set.    However, later in Section 4 we will see that
the behavior of some zero-sets is rather different.    Probably it is too early 
to raise any conjecture on the global behavior of zero-sets.

\begin{rem}\label{rem-1}
In \cite[p.308]{MatSho14}, it is noted that the left-most zero in 
\cite[Figure 1]{MatSho14} is
$$(-0.830372)+i(35.603804).$$
However, there is at least one zero of $\zeta_2(s,s)$ which is located more left.    
When the authors wrote \cite{MatSho14}, they overlooked the following two zeros:
\begin{align*}
(-0.874058504) + i(44.93750365),\\
(-0.710036436) + i(53.91464901).
\end{align*}
\end{rem}

\section{Approaching the axis $s_2=0$}

From Figure \ref{Fig2-1} we can observe that, when $\delta$ becomes larger, 
the shape of the curves consisting
of zeros also becomes larger, and the bottoms of the curves look approaching to the 
horizontal axis (that is, the axis $s_2=0$).    Figure \ref{Fig3} describes 
the curves of the absolute values of zeros
when $\delta=14.0$.    In this case one curve indeed touches the horizontal axis.


\begin{figure}
\begin{center} \leavevmode
\includegraphics[width=0.5\textwidth]{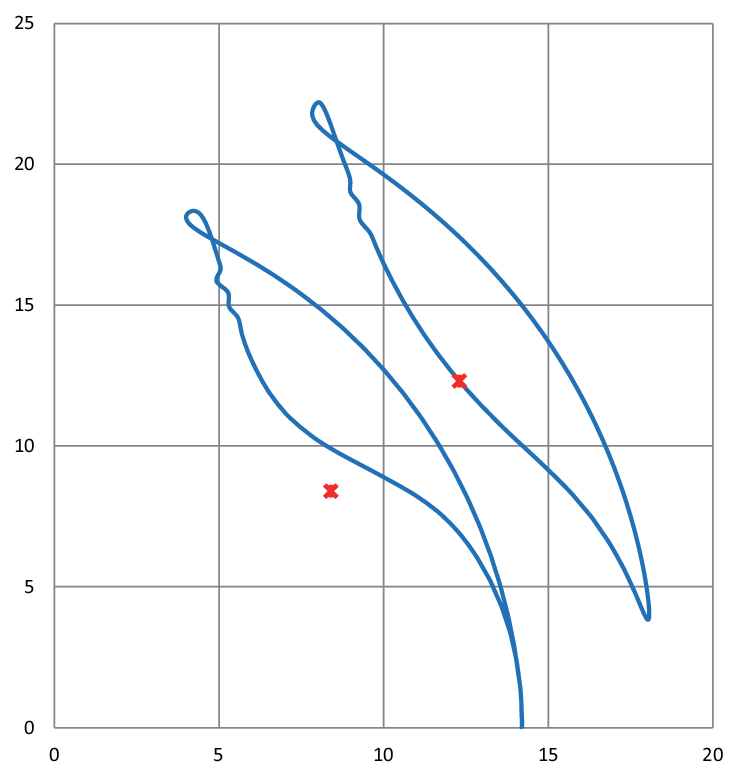}
\end{center}
\caption{The loci of the absolute values of zeros of $\zeta_2(s_1,s_2)$ 
around $(a_1,a_1)$ and $(a_2,a_2)$ with $|s_1-s_2|=\delta$, where
$\delta=14.0$.
The horizontal axis
represents $|s_1|$ and the vertical axis does $|s_2|$.}
\label{Fig3}
\end{figure}

\begin{figure}
\begin{center} \leavevmode
\includegraphics[width=0.9\textwidth]{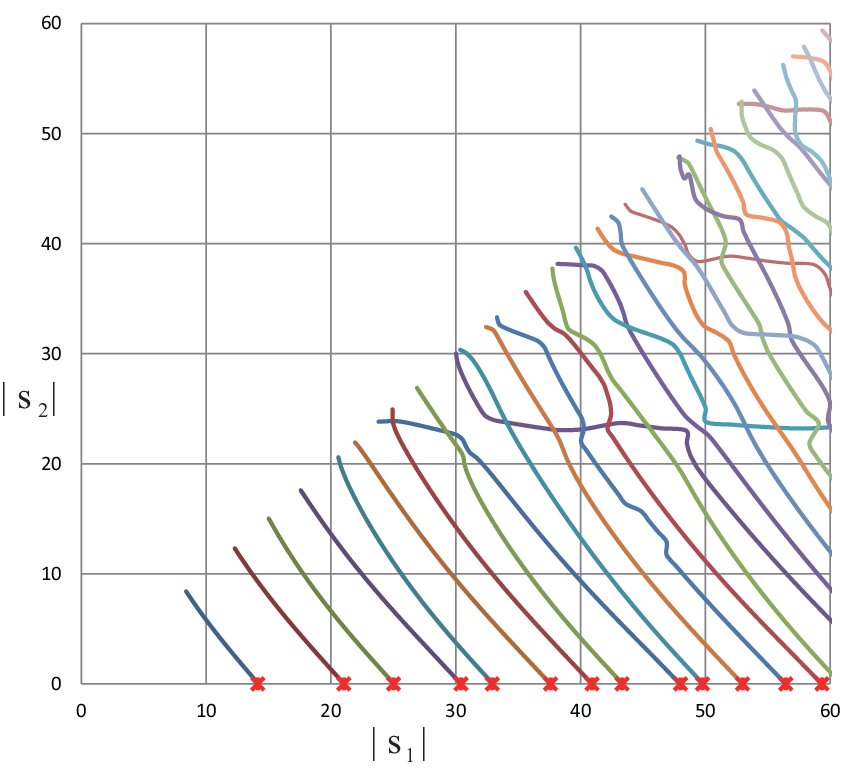}
\end{center}
\caption{The absolute values of zeros satisfying $s_2=\eta s_1$, where $1\geq\eta\geq 0$.
The horizontal axis
represents $|s_1|$ (up to $60$) and the vertical axis does $|s_2|$.
The cross marks represent the absolute values of zeros of $\zeta(s)$.}
\label{Fig4}
\end{figure}


How about the behavior of other curves?     
To investigate this point, now we use an alternative
way of calculating zeros.    Consider the equation $s_2=\eta s_1$.    In \cite{MatSho14},
we studied the zeros under the condition $\eta=1$.     Starting with the data of those 
zeros, we search for the zeros for various values of $\eta$, $1\geq \eta\geq 0$.
When $\eta\to 0$, we find that almost all curves consisting of zeros tend to the
horizontal axis (see Figure \ref{Fig4}).


\begin{table}
\caption{The left is the list of the values of $s_1$ of the points 
where the curves touch the horizontal axis.    The right is the list of 
non-trivial zeros of $\zeta(s)$.}
\label{table-1}
\begin{center}
\begin{tabbing}
\hspace{0.3cm}\= \hspace{8cm} \= \kill
\> $ (1.247595281) + i (14.14857043) $ \> $ (0.5) + i (14.13472514) $ \\
\> $ (1.279113136) + i (21.01244258) $ \> $ (0.5) + i (21.02203964) $ \\
\> $ (1.292752716) + i (25.03054326) $ \> $ (0.5) + i (25.01085758) $ \\
\> $ (1.304538379) + i (30.39099998) $ \> $ (0.5) + i (30.42487613) $ \\
\> $ (1.310484619) + i (32.97276761) $ \> $ (0.5) + i (32.93506159) $ \\
\> $ (1.319356152) + i (37.57182573) $ \> $ (0.5) + i (37.58617816) $ \\
\> $ (1.320370441) + i (40.90318345) $ \> $ (0.5) + i (40.91871901) $ \\
\> $ (1.328328378) + i (43.36573059) $ \> $ (0.5) + i (43.32707328) $ \\
\> $ (1.333209526) + i (47.94829355) $ \> $ (0.5) + i (48.00515088) $ \\
\> $ (1.330151768) + i (49.81347595) $ \> $ (0.5) + i (49.77383248) $ \\
\> $ (1.338852528) + i (52.98250152) $ \> $ (0.5) + i (52.97032148) $ \\
\> $ (1.341784096) + i (56.42981942) $ \> $ (0.5) + i (56.44624769) $ \\
\> $ (1.336083544) + i (59.30477438) $ \> $ (0.5) + i (59.34704400) $ \\
\> $ (1.347599692) + i (60.90235918) $ \> $ (0.5) + i (60.83177852) $ \\
\> $ (1.350310992) + i (65.06828349) $ \> $ (0.5) + i (65.11254405) $ \\
\> $ (1.343496503) + i (67.09040611) $ \> $ (0.5) + i (67.07981053) $ \\
\> $ (1.348806781) + i (69.55741541) $ \> $ (0.5) + i (69.54640171) $ \\
\> $ (1.354734356) + i (72.08670993) $ \> $ (0.5) + i (72.06715767) $ \\
\> $ (1.356705834) + i (75.63889778) $ \> $ (0.5) + i (75.70469069) $ \\
\> $ (1.344904017) + i (77.17673184) $ \> $ (0.5) + i (77.14484007) $ \\
\> $ (1.360065509) + i (79.37508073) $ \> $ (0.5) + i (79.33737502) $ \\
\> $ (1.360772059) + i (82.86977267) $ \> $ (0.5) + i (82.91038085) $ \\
\> $ (1.355304386) + i (84.75005063) $ \> $ (0.5) + i (84.73549298) $ \\
\> $ (1.353373038) + i (87.38999143) $ \> $ (0.5) + i (87.42527461) $ \\
\> $ (1.365347698) + i (88.87469754) $ \> $ (0.5) + i (88.80911121) $ \\
\> $ (1.367563235) + i (92.45429247) $ \> $ (0.5) + i (92.49189927) $ \\
\> $ (1.352878941) + i (94.60615786) $ \> $ (0.5) + i (94.65134404) $ \\
\> $ (1.364213735) + i (95.93955674) $ \> $ (0.5) + i (95.87063423) $ \\
\> $ (1.367791276) + i (98.82956582) $ \> $ (0.5) + i (98.83119422) $ \\
\end{tabbing}
\end{center}
\end{table}


From Figure \ref{Fig4} we observe that the values $|s_1|$ of the points $(s_1,s_2)$
at which the loci touch the
horizontal axis are almost the same as the absolute values of zeros of $\zeta(s)$.
Let us list up those values.
The left of Table \ref{table-1} is the list of the values of $s_1$ of the points 
where the curves touch the horizontal axis.    Comparing this table with the list of  
non-trivial zeros of $\zeta(s)$ (the right of Table \ref{table-1}), we find:

\begin{obs}\label{obs-1}
The imaginary part of each $s_1$ in the left list of Table \ref{table-1} is very close 
to an imaginary part of a value appearing in the right list of Table \ref{table-1},
that is, a non-trivial zero of $\zeta(s)$. 
\end{obs}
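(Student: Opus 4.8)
The plan is to degenerate these curves to a one–variable function, evaluate that function in closed form, and then locate its zeros by perturbing off the nontrivial zeros of $\zeta$. Along a curve of zeros $\{(s_1,\eta s_1):\zeta_2(s_1,\eta s_1)=0\}$, suppose $s_1\to s_1^{*}$ as $\eta\to 0^{+}$. Since $\zeta_2$ continues meromorphically to $\mathbb{C}^{2}$ (\cite{Mat02}) and, as the next step will show, $\zeta_2(\cdot,0)$ is holomorphic throughout the region relevant to Table \ref{table-1}, continuity forces $\zeta_2(s_1^{*},0)=0$. So everything reduces to understanding the zeros of the one–variable function $s_1\mapsto\zeta_2(s_1,0)$.

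To compute it, I would start from the harmonic product relation
\[
\zeta(s_1)\zeta(s_2)=\zeta_2(s_1,s_2)+\zeta_2(s_2,s_1)+\zeta(s_1+s_2)
\]
(immediate from splitting $\sum_{a,b\ge1}a^{-s_1}b^{-s_2}$ according to $a<b$, $a>b$, $a=b$), the elementary evaluation $\zeta_2(0,s)=\sum_{n\ge2}(n-1)n^{-s}=\zeta(s-1)-\zeta(s)$, and the value $\zeta(0)=-\tfrac12$. Setting $s_2=0$ in the identity gives
\[
\zeta_2(s_1,0)=-\zeta(s_1-1)-\tfrac12\zeta(s_1),
\]
so the candidate limit points are precisely the zeros of $2\zeta(s_1-1)+\zeta(s_1)$; this is holomorphic for $\Re s_1\notin\{1,2\}$, hence has no pole in the region of Table \ref{table-1}.

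Next I would locate the zeros of $2\zeta(s-1)+\zeta(s)$ near the nontrivial zeros of $\zeta$. Fix such a zero $\rho=\tfrac12+i\gamma$, which is simple (certainly in the numerical range). Then $s\mapsto\zeta(s-1)$ has a simple zero at $s=1+\rho$, while $\zeta(s)$ is holomorphic and nonvanishing near $1+\rho$ because $\Re(1+\rho)=\tfrac32>1$. A Rouch\'e argument on the circle $|s-(1+\rho)|=r$ with $r$ comparable to $|\zeta(1+\rho)|/|\zeta'(\rho)|$ --- on which $|\zeta(s)|<2|\zeta(s-1)|$ --- shows $2\zeta(s-1)+\zeta(s)$ has exactly one zero $s_1^{*}(\rho)$ in that disk, and a first-order expansion of $\zeta(s-1)$ gives
\[
s_1^{*}(\rho)=1+\rho-\frac{\zeta(1+\rho)}{2\zeta'(\rho)}+O(r^{2}).
\]
In particular $\Im s_1^{*}(\rho)$ differs from $\gamma$ only by the small quantity $\Im\{\zeta(\tfrac32+i\gamma)/2\zeta'(\tfrac12+i\gamma)\}$, while $\Re s_1^{*}(\rho)=\tfrac32-\Re\{\zeta(\tfrac32+i\gamma)/2\zeta'(\tfrac12+i\gamma)\}+O(r^{2})$ comes out near $1.3$; this reproduces the shape of the left column of Table \ref{table-1}, and matching these $s_1^{*}(\rho)$ with the curves that reach the axis yields the Observation.

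The step I expect to be the main obstacle is exactly this last matching, together with the sharpness. The local analysis produces, near each $1+\rho$, one zero of $\zeta_2(s_1,0)$, but it neither shows that the traced curves converge as $\eta\to0$ (rather than escaping to infinity) nor, granting convergence, that they land on the family $\{s_1^{*}(\rho)\}$ rather than on some other zero of $2\zeta(s-1)+\zeta(s)$ that may lie in that region; deciding this is a genuinely global question about how the zero-sets of $\zeta_2(s_1,s_2)$ deform as $\eta$ runs from $1$ to $0$, of exactly the kind the paper flags as difficult. Moreover, the Rouch\'e radius $r\asymp|\zeta(\tfrac32+i\gamma)|/|\zeta'(\rho)|$ is only of order $0.3$--$0.5$ at the low heights of Table \ref{table-1}: comfortably below the gaps between zeta zeros, so enough to pin down which zero is being shadowed, but far too crude to explain the observed agreement to within $\sim10^{-2}$; sharpening it would need quantitative control of $\zeta'(\rho)$ and $\zeta(\tfrac32+i\gamma)$ over the range in question, and any statement uniform in $\gamma$ would in addition require lower bounds for $|\zeta'(\rho)|$ that are at present out of reach.
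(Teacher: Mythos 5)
This statement is a numerical Observation, and the paper itself offers only a heuristic explanation that it explicitly flags as non-rigorous; judged at that level your proposal is sound, its computations are correct, and it reaches the key points by a genuinely different route. You arrive at the same pivotal identity $\zeta_2(s_1,0)=-\zeta(s_1-1)-\tfrac{1}{2}\zeta(s_1)$ (the paper's (3.2)), hence the same equation $\zeta(s_1^*)=-2\zeta(s_1^*-1)$, but you derive it from the harmonic product $\zeta(s_1)\zeta(s_2)=\zeta_2(s_1,s_2)+\zeta_2(s_2,s_1)+\zeta(s_1+s_2)$ together with $\zeta_2(0,s)=\zeta(s-1)-\zeta(s)$ and $\zeta(0)=-\tfrac{1}{2}$, whereas the paper specializes the Euler--Maclaurin/Akiyama--Egami--Tanigawa expansion (3.1) at $s_2=0$; both derivations are valid, and yours is arguably cleaner. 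The localization step is where the two approaches really differ: you run Rouch\'e around $1+\rho$ and expand to first order to get $s_1^*\approx 1+\rho-\zeta(1+\rho)/(2\zeta'(\rho))$, while the paper argues geometrically with the curve $C(t)=\{\zeta(\sigma+it)\;|\;\sigma\in\mathbb{R}\}$ and the segment $L(t)$ joining $\zeta(s(t))$ and $\zeta(s(t)-1)$. Your version is more quantitative and could in principle be made rigorous zero-by-zero with numerical bounds; but, as you yourself note, the Rouch\'e radius alone cannot explain why the \emph{imaginary} parts agree to $\sim 10^{-2}$. That is exactly what the paper's geometric input supplies: since the slope of $C(t)$ varies slowly in $\sigma$, the values $\zeta(\tfrac{3}{2}+i\gamma)$ and $\zeta'(\tfrac{1}{2}+i\gamma)$ point in nearly the same direction, so your correction term $\zeta(1+\rho)/(2\zeta'(\rho))$ is nearly real and the displacement from $1+\rho$ is absorbed almost entirely by the real part (which is also why the real parts cluster near $1.3$, cf.\ Remark 2 of the paper). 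Grafting that observation onto your expansion would close the sharpness gap you flag. The remaining issue --- that the curves traced as $\eta\to 0$ actually converge and land on these particular zeros --- is equally unaddressed by the paper, which presents that matching purely as numerical evidence.
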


The following argument is not rigorous, but at least heuristically, explains
this observation.
 
Recall the formula \cite[(2.3)]{MatSho14} (originally in \cite{AET}): 
\begin{align}\label{3-1}
&\zeta_2(s_1,s_2)=\frac{\zeta(s_1+s_2-1)}{s_2-1}-\frac{\zeta(s_1+s_2)}{2}\\
&\qquad+\sum_{q=1}^l (s_2)_q\frac{B_{q+1}}{(q+1)!}\zeta(s_1+s_2+q)
-\sum_{n_1=1}^{\infty}\frac{\phi_l(n_1,s_2)}{n_1^{s_1}},\notag
\end{align}
where $(s_2)_q=s_2(s_2+1)\cdots(s_2+q-1)$, $B_{q+1}$ is the $(q+1)$-th Bernoulli number,
and 
\begin{align*}
\lefteqn{\phi_l(n_1,s_2)}\\
&=\sum_{k=1}^n\frac{1}{k^s}-\left\{\frac{n^{1-s}-1}{1-s}
+\frac{1}{2n^s}-\sum_{q=1}^l\frac{(s)_q B_{q+1}}{(q+1)! n^{s+q}}+\zeta(s)
-\frac{1}{s-1}\right\}
\end{align*}
(see \cite[(2.4)]{MatSho14}).
Put $s_2=0$ in \eqref{3-1}.    Since $\phi_l(n_1,0)=0$ (which can be seen by
\cite[(2.5)]{MatSho14}), we find that the two sums on the right-hand side of
\eqref{3-1} are both zero, so
\begin{align}\label{3-2}
\zeta_2(s_1,0)=-\zeta(s_1-1)-\frac{\zeta(s_1)}{2}.
\end{align}  
Now, let $s_1^*$ be one of the values in the left list of Table 1.
Then $\zeta_2(s_1^*,0)=0$, so \eqref{3-2} implies
\begin{align}\label{3-3}
\zeta(s_1^*)=-2\zeta(s_1^*-1).
\end{align}

Let
$$
C(t)=\{\zeta(\sigma+it)\;|\;\sigma\in\mathbb{R}\}.
$$
If $t$ is very close to the imaginary part of a non-trivial zero of $\zeta(s)$, the graph
of the curve $C(t)$ passes very close to the origin when $\sigma=1/2$ 
(see Figure \ref{Fig5}).
As can be seen from Figure \ref{Fig5}, when $\sigma$ moves, the slope of the graph 
of $C(t)$ does not
so rapidly change.    This can be naturally expected, because the approximate
functional equation of $\zeta(s)$ (see \cite[Theorem 4.15]{Tit51}) implies that
the behavior of $\zeta(s)$ is dominated by the terms of the form 
$n^{-s}=n^{-\sigma}e^{-it \log n}$, and if $t$ is fixed, then the ``argument" part of
these terms does not change.


\begin{figure}
\begin{center} \leavevmode
\includegraphics[width=0.9\textwidth]{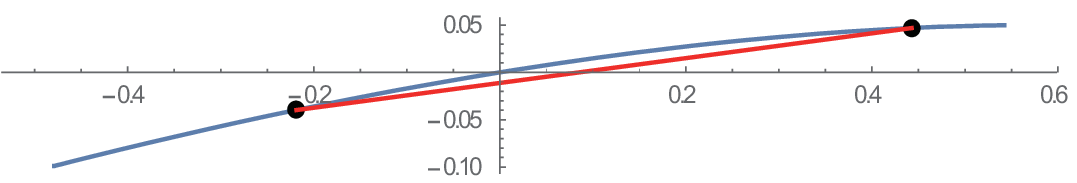} \par \bigskip
\includegraphics[width=0.9\textwidth]{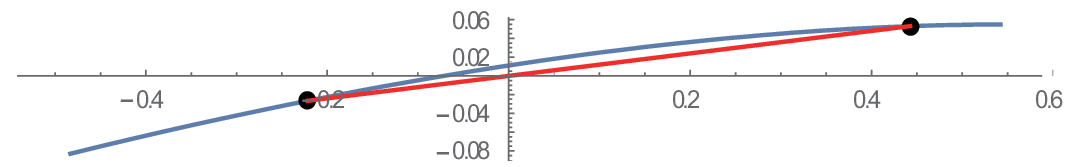}
\end{center}
\caption{The first one is the locus of $\zeta(\sigma+it_0)$, where 
$t_0=14.13472514$ (= the first of the right list of Table \ref{table-1}).   Therefore this 
locus crosses the origin.    The two black-dots represent the points 
$s(t_0)=(0.442629) + i(0.0469269)$ and
$s(t_0-1)=(-0.218684) -i(0.0397677)$, with
$\sigma(t_0)=1.2475$.   
This satisfies \eqref{3-4}.
The second one is the locus of $\zeta(\sigma+it_0^*)$,
where $t_0^*=14.14857043$ (= the first of the left list of Table \ref{table-1}),
for which $L(t_0^*)$ crosses the origin.}
\label{Fig5}
\end{figure}


Therefore we can find a number $\sigma(t)\in\mathbb{R}$ such that
\begin{align}\label{3-4}
|\zeta(s(t))|=2|\zeta(s(t)-1)|,
\end{align} 
where $s(t)=\sigma(t)+it$.    We denote by $L(t)$ the segment joining $\zeta(s(t))$ and
$\zeta(s(t)-1)$. 

Now choose $t=t_0$, for which
$(1/2)+it_0$ is in the right list of Table \ref{table-1}.    Then, as in the first one of 
Figure \ref{Fig5} (where the case $t_0=14.13472514$ is described), 
$\arg\zeta(s(t_0)-1)$ is 
almost equal to $\arg\zeta(s(t_0))\pm\pi$, so $L(t_0)$ passes very close to the origin.
(If $C(t_0)$ would be a straight line, then $L(t_0)$ could indeed cross the origin; but 
this is not the case.)
Move the value of $t$ a little from $t_0$.    Then the curve $C(t)$ also moves a little.
Then, as in the second one of Figure \ref{Fig5}, we may find a value of $t=t_0^*$, 
close to $t_0$, for which
$L(t_0^*)$ indeed crosses the origin.    This implies    
$\zeta(s(t_0^*))=-2\zeta(s(t_0^*)-1)$, that is, in view of \eqref{3-3}, this 
$s(t_0^*)=\sigma(t_0^*)+it_0^*$ 
should be in the left list of Table \ref{table-1}.

It is an interesting problem to make the above argument more rigorous, and to get some
formula which expresses the phenomenon of Observation \ref{obs-1}.

\begin{rem}
It is also observed that the real parts of the points on the list of Table 1 are
close to each other, around the value $1.3$.
So far we have not found any theoretical reasoning of this 
phenomenon.
\end{rem}

\section{Approaching the zeros of $\zeta(s)=1$}

In Figure \ref{Fig4}, we can observe that almost all curves approach the horizontal axis.
However, when we extend the range of computations, we 
find that there are curves which do not seem to approach the
horizontal axis (Figure \ref{Fig4-2}).    Along these curves, it seems that 
$|s_1|$ becomes larger and larger.


\begin{figure}
\begin{center} \leavevmode
\includegraphics[width=0.9\textwidth]{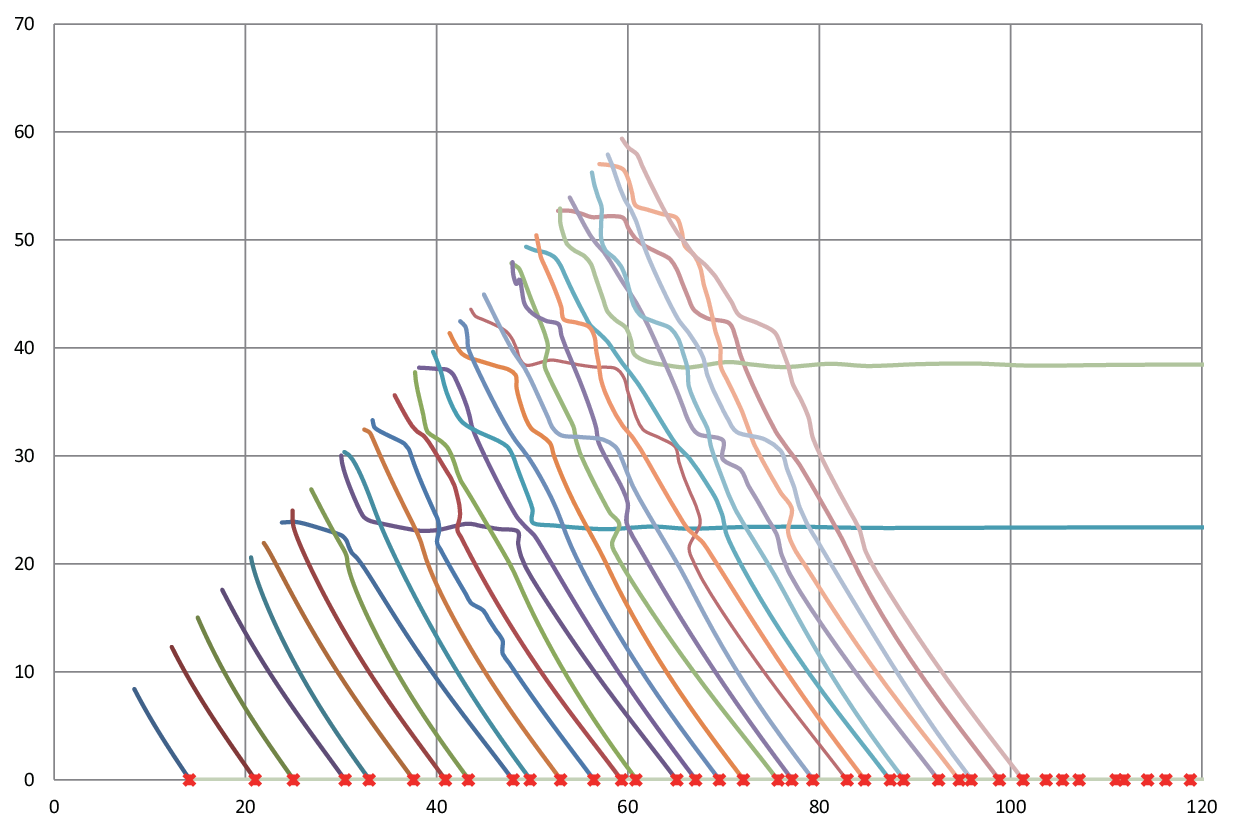}
\end{center}
\caption{This figure represents the same computations as in Figure \ref{Fig4}, but the 
range of $|s_1|$ is up to $120$.   
The cross marks represent the absolute values of zeros of $\zeta(s)$.
There appear two curves which do not approach the 
horizontal axis.}
\label{Fig4-2}
\end{figure}


Moreover, extending the range of computations, we can find more zero-sets, along them
$|s_1|$ seems to tend to infinity (see Figure \ref{Fig7-1}).


\begin{figure}
\begin{center} \leavevmode
\includegraphics[width=0.6\textwidth]{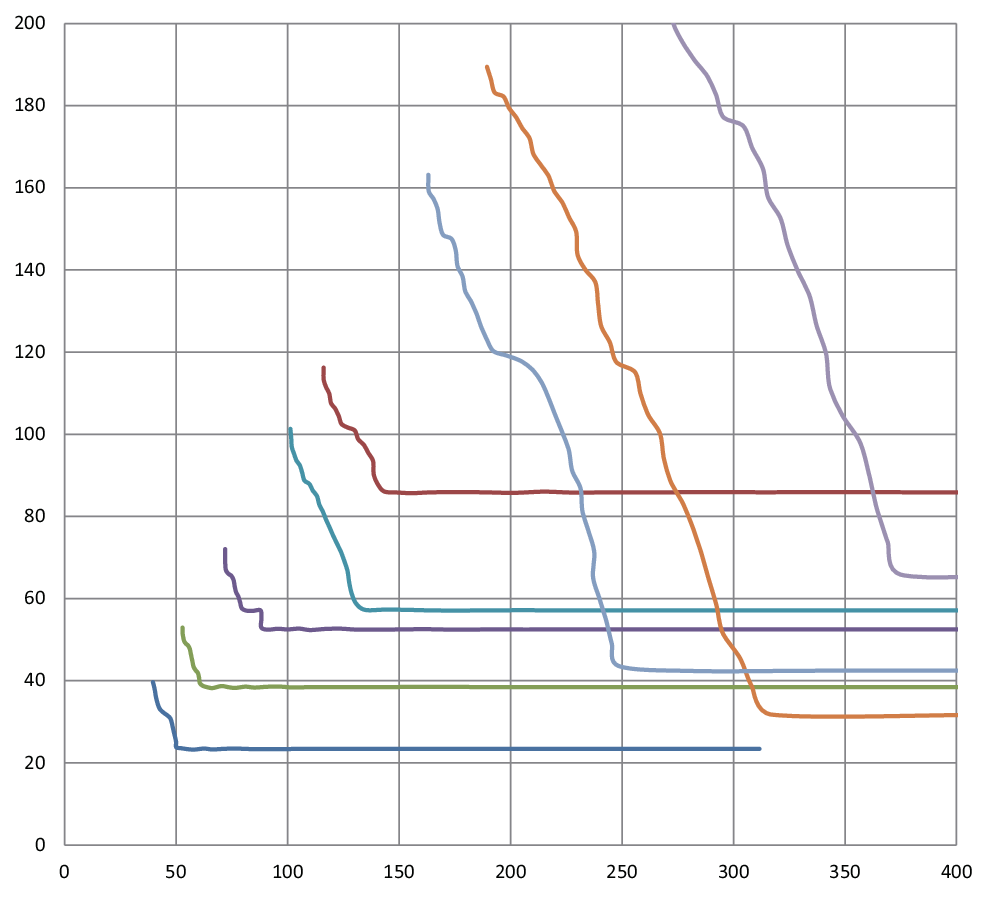}
\end{center}
\caption{The same computations as in Figure \ref{Fig4}, but the range of $|s_1|$ is up to 
$400$.   Only the curves which do not approach the horizontal axis are drawn.}
\label{Fig7-1}
\end{figure}


The numerical data suggests that $\Re s_1$ tends to infinity
along these curves (the left of Figure \ref{Fig7-2}), while
$\Re s_2$ remains finite (the right of Figure \ref{Fig7-2}). 

\begin{figure}
\begin{center} \leavevmode
\includegraphics[width=0.45\textwidth]{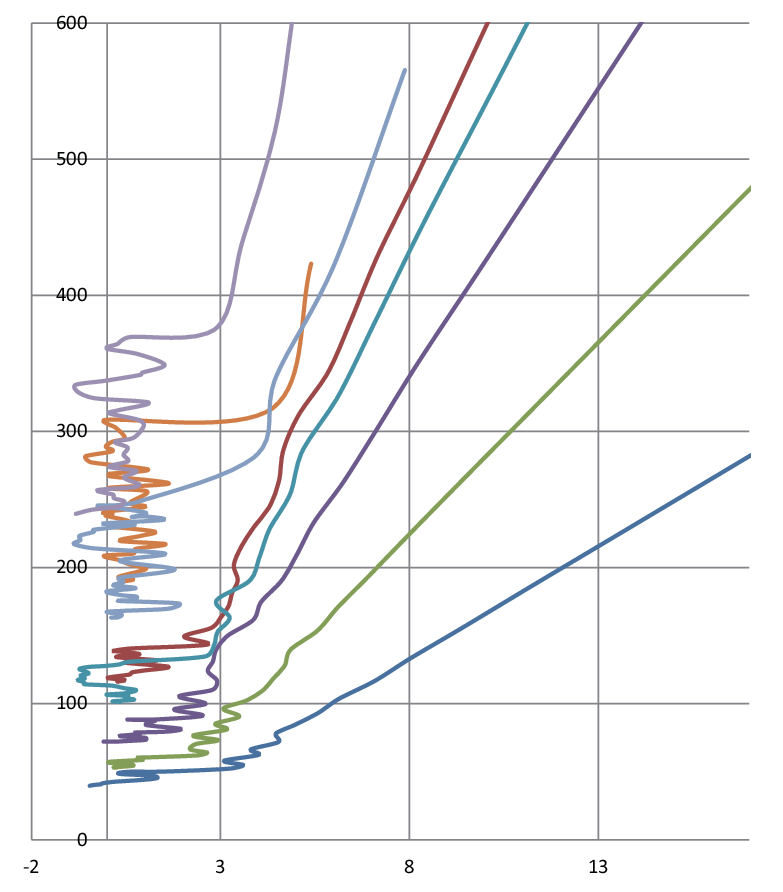}
\includegraphics[width=0.45\textwidth]{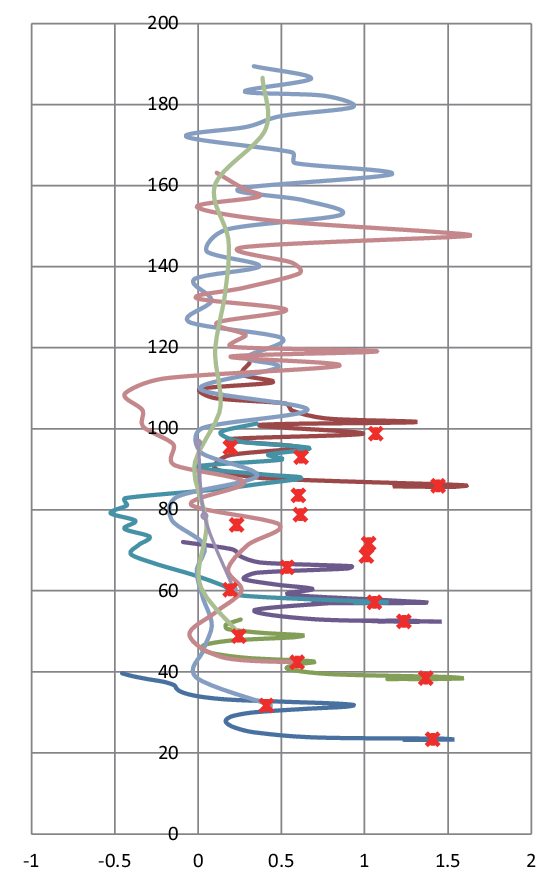}
\end{center}
\caption{The behavior of $s_1$ (left) and $s_2$ (right) of curves described in
Figure \ref{Fig7-1}.   The cross marks on the right figure are solutions of 
$\zeta(s_2)=1$.    There may be
more solutions, but we only mark the solutions we found numerically. }
\label{Fig7-2}
\end{figure}


What happens?    We can prove the following facts.

\begin{prop}\label{prop1}
{\rm (i)}
{\it Let $(s_1^{(m)},s_2^{(m)})$ $(m=1,2,\ldots)$ be a sequence of points on a zero-divisor 
of $\zeta_2(s_1,s_2)$.   If $\sigma_1^{(m)}=\Re s_1^{(m)}$ tends to infinity and 
$|s_2^{(m)}|$ remains bounded {\rm(}and not close to $1${\rm)} as
$m\to\infty$, then $s_2^{(m)}$ tends to a solution of $\zeta(s_2)=1$}.

{\rm (ii)}
{\it Conversely, for any solution $\rho_2$ of $\zeta(\rho_2)=1$ and any $\varepsilon>0$, 
we can find a zero of $\zeta_2(s_1,s_2)$ such that $|s_2-\rho_2|<\varepsilon$}.
\end{prop}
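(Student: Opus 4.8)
The plan is to prove that $\zeta_2(s_1,s_2)$ degenerates, as $\Re s_1\to\infty$, to the one-variable function $\zeta(s_2)-1$, and then to read off both halves of the Proposition from this degeneration together with Rouch\'e's theorem. The heuristic is transparent: in the region of absolute convergence one has $\zeta_2(s_1,s_2)=\sum_{n_1\ge1}n_1^{-s_1}\bigl(\zeta(s_2)-\sum_{m=1}^{n_1}m^{-s_2}\bigr)$, and as $\Re s_1\to\infty$ only the term $n_1=1$, equal to $\zeta(s_2)-1$, survives. To make this rigorous also for $s_2$ with $\Re s_2\le 1$, I would use \eqref{3-1}: for $\Re s_1$ large it holds with a fixed $l$, and there each of $\zeta(s_1+s_2-1)$, $\zeta(s_1+s_2)$, $\zeta(s_1+s_2+q)$ tends to $1$, uniformly for $s_2$ in a given compact set $K\subset\mathbb{C}\setminus\{1\}$, while $\phi_l(n_1,s_2)$ grows at most polynomially in $n_1$ uniformly on $K$, so that $\sum_{n_1\ge2}\phi_l(n_1,s_2)n_1^{-s_1}\to0$ and only $-\phi_l(1,s_2)$ remains. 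Passing to the limit in \eqref{3-1} gives
\begin{equation*}
\lim_{\Re s_1\to\infty}\zeta_2(s_1,s_2)=\frac{1}{s_2-1}-\frac12+\sum_{q=1}^{l}(s_2)_q\frac{B_{q+1}}{(q+1)!}-\phi_l(1,s_2),
\end{equation*}
and since evaluating the definition of $\phi_l$ at $n_1=1$ yields $\phi_l(1,s_2)=\tfrac12+\sum_{q=1}^{l}(s_2)_q\frac{B_{q+1}}{(q+1)!}-\zeta(s_2)+\tfrac{1}{s_2-1}$, every term cancels except $\zeta(s_2)-1$. Thus $\zeta_2(s_1,s_2)\to\zeta(s_2)-1$ uniformly on compact subsets of $\mathbb{C}\setminus\{1\}$, and (multiplying \eqref{3-1} by $s_2-1$ before taking the limit) $(s_2-1)\zeta_2(s_1,s_2)\to(s_2-1)(\zeta(s_2)-1)$ uniformly on compact subsets of $\mathbb{C}$, the limit taking the value $1$ at $s_2=1$. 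At $s_2=0$ this recovers $\zeta(0)-1=-\tfrac32$, consistently with \eqref{3-2}.

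\emph{Part (i).} Let $(s_1^{(m)},s_2^{(m)})$ be as in the statement. From the $(s_2-1)$-weighted form of the limit there are $r_0>0$ and $\Sigma_0$ with $|(s_2-1)\zeta_2(s_1,s_2)|\ge\tfrac12$ whenever $\Re s_1\ge\Sigma_0$ and $|s_2-1|\le r_0$; hence $\zeta_2$ has no zero there, and since $|s_2^{(m)}|$ is bounded, the points $s_2^{(m)}$ lie, for $m$ large, in a fixed compact set $K'\subset\mathbb{C}\setminus\{1\}$. On $K'$ the uniform limit gives $\zeta(s_2^{(m)})-1=\bigl(\zeta(s_2^{(m)})-1\bigr)-\zeta_2(s_1^{(m)},s_2^{(m)})\to0$, i.e.\ $\zeta(s_2^{(m)})\to1$. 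As $\zeta-1$ has only finitely many zeros in $K'$, this forces $\mathrm{dist}\bigl(s_2^{(m)},\{z:\zeta(z)=1\}\bigr)\to0$; since $(s_2^{(m)})$ is bounded it has accumulation points, and each of them is a solution of $\zeta(s_2)=1$.

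\emph{Part (ii).} Fix a solution $\rho_2$ of $\zeta(\rho_2)=1$; then $\rho_2\ne1$. Choose $0<\varepsilon'<\varepsilon$ so small that $\overline{D(\rho_2,\varepsilon')}$ avoids $1$ and contains no zero of $\zeta(s)-1$ other than $\rho_2$, and set $\mu=\min_{|s_2-\rho_2|=\varepsilon'}|\zeta(s_2)-1|>0$. By the limit there is a real $\sigma$, arbitrarily large, such that $s_2\mapsto\zeta_2(\sigma,s_2)$ is holomorphic on $\overline{D(\rho_2,\varepsilon')}$ (its remaining $s_2$-poles, at $s_2=2-\sigma,1-\sigma,-\sigma,\dots$, lie far to the left) and $|\zeta_2(\sigma,s_2)-(\zeta(s_2)-1)|<\mu\le|\zeta(s_2)-1|$ on $|s_2-\rho_2|=\varepsilon'$. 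By Rouch\'e's theorem $s_2\mapsto\zeta_2(\sigma,s_2)$ has the same number of zeros in $D(\rho_2,\varepsilon')$ as $\zeta(s)-1$, hence at least one, say $s_2^{\ast}$; then $(\sigma,s_2^{\ast})$ is a zero of $\zeta_2$ with $|s_2^{\ast}-\rho_2|<\varepsilon'<\varepsilon$.

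\emph{General $r$, and the main obstacle.} The same three steps apply to \eqref{1-2}: as $\Re s_1\to\infty$ the term $n_1=1$ dominates, so $\zeta_r(s_1,\dots,s_r)$ tends to $\sum_{1<n_2<\cdots<n_r}(n_2^{s_2}\cdots n_r^{s_r})^{-1}$, which, peeling off $n_2=1$ repeatedly, equals the alternating combination $\zeta_{r-1}(s_2,\dots,s_r)-\zeta_{r-2}(s_3,\dots,s_r)+\cdots$ (which reduces to $\zeta(s_2)-1$ for $r=2$); the arguments for (i) and (ii) then carry over with the zero set of this limit function in place of $\{\zeta=1\}$. The one genuinely technical point, in the case $r=2$ as well as in general, is the first step: upgrading ``$n_1=1$ dominates'' to a limit that is \emph{uniform} on compact $s_2$-sets reaching into $\Re s_2\le 1$, i.e.\ passing to the limit inside the analytic continuation formula \eqref{3-1} (respectively its $r$-fold Euler-Maclaurin/Mellin-Barnes analogue) with uniform control of the remainder series. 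Once that uniform degeneration is in hand, (i) is a compactness argument and (ii) is an application of Rouch\'e's theorem, as above.
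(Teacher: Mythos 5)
Your proof is correct, but it reaches the key degeneration $\zeta_2(s_1,s_2)\to\zeta(s_2)-1$ by a genuinely different route from the paper. The paper derives this limit (for general $r$, as Theorems \ref{th-y-1} and \ref{th-y-2}) from the harmonic product formula \eqref{y-7}, which for $r=2$ reads $\zeta(s_1)\zeta(s_2)=\zeta_2(s_1,s_2)+\zeta_2(s_2,s_1)+\zeta(s_1+s_2)$: as $\sigma_1\to+\infty$ one has $\zeta(s_1)\to1$, $\zeta(s_1+s_2)\to1$, and $\zeta_2(s_2,s_1)\to0$ (the latter being Theorem \ref{th-y-0}, proved via the series for $r=2$ and via the Mellin--Barnes continuation \eqref{y-5} for general $r$), so the limit $\zeta(s_2)-1=F_1(s_2)$ drops out with no computation; induction on $r$ then yields the alternating sum $F_{r-1}$ exactly as you sketch at the end. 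You instead pass to the limit inside the Euler--Maclaurin continuation \eqref{3-1} and verify the cancellation against $\phi_l(1,s_2)$ explicitly; this is more self-contained for $r=2$, gives uniformity on compact sets directly, and has the merit of treating the singular hyperplane $s_2=1$ explicitly via the weighted limit of $(s_2-1)\zeta_2$ (the paper simply assumes the bounded region avoids the singularities). The one step you assert rather than prove is the uniform polynomial (in fact decaying) bound on $\phi_l(n_1,s_2)$ needed to kill $\sum_{n_1\ge2}\phi_l(n_1,s_2)n_1^{-s_1}$; this is standard and implicit in the convergence of \eqref{3-1} itself (see \cite[(2.5)]{MatSho14} and \cite{AET}), so it is a citation to supply, not a gap. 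Your part (ii) is the same Rouch{\'e} argument as the paper's proof of Theorem \ref{th-y-2} (ii), applied directly to $\zeta(s_2)-1$ rather than to $F(s_r)$.
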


This result is due to Professor Seidai Yasuda (Osaka University). 
The authors express their sincere gratitude to him for the permission of including his
result in the present paper.

In the next section we will state general theorems on the asymptotic behavior of
zero-sets of $r$-fold sum \eqref{1-2}.    The above proposition is just a special case of
those theorems.

\begin{rem}\label{rem_trivial}
The assertion (i) of the above proposition is, if $\Re s_2>1$, obvious.
Because in this case we can use \eqref{1-1}.    Letting $\Re s_1\to\infty$ on \eqref{1-1},
we see that the right-hand side tends to 
$\sum_{n_2=1}^{\infty}(1+n_2)^{-s_2}=\zeta(s_2)-1$.
\end{rem}

\section{The asymptotic behavior of zero-sets of the $r$-fold sum}

We study the behavior of $\zeta_r(s_1,\ldots,s_r)$ when $\sigma_k=\Re s_k$ for some $k$
tends to $+\infty$ 
while the other variables remain bounded.    
We write $\mathbb{C}^r=\mathbb{C}_{(k)}^{r-1}\times\mathbb{C}_{(k)}$, 
where $\mathbb{C}_{(k)}^{r-1}$ consists of all $(s_1,\ldots,s_{k-1},s_{k+1},\ldots,s_r)$
and $\mathbb{C}_{(k)}$ denotes the $s_k$-plane.
Let $S_{k,r}$ be the union of all hyperplanes $A_{j,n}$ and $A_1$ (defined in 
Section \ref{sec-1}) whose defining equation does not include $s_k$.   That is,
$$
S_{k,r}=\Bigl(\bigcup_{2\leq j\leq r-k}\bigcup_{n=0}^{\infty}A_r^{(j,n)}\Bigr)
\bigcup A_r^{(1)} \qquad (1\leq k\leq r-2),
$$
$S_{r-1,r}=A_r^{(1)}$, and $S_{r,r}=\emptyset$.

First, when $k\geq 2$, the conclusion
is simple.

\begin{thm}\label{th-y-0}
Let $r\geq 2$, $2\leq k\leq r$, and $E_{k,r-1}$ a closed subset of 
$\mathbb{C}_{(k)}^{r-1}$
such that
$$
\{(\sigma_1,\ldots,\sigma_{k-1},\sigma_{k+1},\ldots,\sigma_r)\;|\;
(s_1,\ldots,s_{k-1},s_{k+1},\ldots,s_r)\in E_{k,r-1}\}
$$
is bounded, and
\begin{align}\label{sing_condition}
(E_{k,r-1}\times \mathbb{C}_{(k)})\cap S_{k,r}=\emptyset.
\end{align}
When $\sigma_k=\Re s_k$ tends to $+\infty$ while the other variables remain
in $E_{k,r-1}$, 
the value
$\zeta_r(s_1,\ldots,s_r)$ tends to $0$, uniformly in $E_{k,r-1}$.
\end{thm}


More interesting is the situation when $k=1$.
Let $r\geq 1$, and define
\begin{align}\label{y-0}
F_{r-1}(s_2,\ldots,s_r)=\sum_{j=1}^{r-1}(-1)^{r-j+1}\zeta_j(s_{r-j+1},\ldots,s_r)-(-1)^r.
\end{align}
{\rm (}When $r=1$, we understand simply that $F_0=1$.{\rm)}
Denote by $\mathcal{H}_{r-1}$ the hypersurface in the space $\mathbb{C}_{(1)}^{r-1}$ 
defined by 
the equation 
\begin{align}\label{y-1}
F_{r-1}(s_2,\ldots,s_r)=0.
\end{align}

\begin{thm}\label{th-y-1}
Let $D_{r-1}$ a bounded closed subset of 
$\mathbb{C}_{(1)}^{r-1}$
such that 
\begin{align}\label{sing_condition2}
(D_{r-1}\times \mathbb{C}_{(1)})\cap S_{1,r}=\emptyset.
\end{align}
Let $(s_1^{(m)},\ldots,s_r^{(m)})$ $(m=1,2,3,\ldots)$ be a sequence of points in $\mathbb{C}^r$.
Assume that, as $m\to\infty$, $\sigma_1^{(m)}=\Re s_1^{(m)}$ tends to $+\infty$ while 
$(s_2^{(m)},\ldots,s_r^{(m)})$ remains in $D_{r-1}$.
Then for any $\varepsilon>0$, we can choose a sufficiently large $M$, uniformly
in $D_{r-1}$, for which
\begin{align}\label{y-1-1}
|\zeta_r(s_1^{(m)},\ldots,s_r^{(m)})-F_{r-1}(s_2^{(m)},\ldots,s_r^{(m)})|<\varepsilon
\end{align}
holds for any $m\geq M$.
\end{thm}

\begin{thm}\label{th-y-2}  Assume $r\geq 2$.

{\rm (i)} Let $(s_1^{(m)},\ldots,s_r^{(m)})$ $(m=1,2,3,\ldots)$ be a sequence of points on a
zero-set of $\zeta_r(s_1,\ldots,s_r)$.    Assume that $\sigma_1^{(m)}=\Re s_1^{(m)}$ tends to
$+\infty$ while
$(s_2^{(m)},\ldots,s_r^{(m)})$ remains in $D_{r-1}$ {\rm(}as in the statement of Theorem \ref{th-y-1}{\rm)}.
Then, uniformly on $D_{r-1}$, the sequence $(s_2^{(m)},\ldots,s_r^{(m)})$ is approaching 
the hypersurface $\mathcal{H}_{r-1}$.
Therefore, we can find a subsequence $(s_2^{(k_m)},\ldots,s_r^{(k_m)})$ $(m=1,2,3,\ldots)$
which converges to a point $(\rho_2,\ldots,\rho_r)$ on $\mathcal{H}_{r-1}$.

{\rm (ii)} Conversely, for any solution $(\rho_2,\ldots,\rho_r)$ of \eqref{y-1}, we can find
a sequence $(s_1^{(m)},\ldots,s_r^{(m)})$ $(m=1,2,3,\ldots)$ on a 
zero-set of $\zeta_r(s_1,\ldots,s_r)$ which converges to $(\rho_2,\ldots,\rho_r)$.
\end{thm}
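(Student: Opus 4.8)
\emph{Part (i).}
Since each $(s_1^{(m)},\ldots,s_r^{(m)})$ lies on the zero-set of $\zeta_r$, applying Theorem~\ref{th-y-1} with $\zeta_r=0$ gives $F_{r-1}(s_2^{(m)},\ldots,s_r^{(m)})\to 0$ as $m\to\infty$. Because $(s_2^{(m)},\ldots,s_r^{(m)})$ stays in the bounded region $D_{r-1}$, every subsequence has a convergent sub-subsequence, whose limit $p$ satisfies $F_{r-1}(p)=0$ by continuity (no relevant singularity occurs in $D_{r-1}$), hence $p\in\mathcal H_{r-1}$. This yields at once the claimed convergent subsequence, and, by the usual ``every subsequence has a good sub-subsequence'' argument, that the distance from $(s_2^{(m)},\ldots,s_r^{(m)})$ to $\mathcal H_{r-1}$ tends to $0$. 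So part (i) is essentially a formal consequence of Theorem~\ref{th-y-1}.

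\emph{Part (ii), reduction to one variable.}
The task is to upgrade ``$\zeta_r$ is small near a point of $\mathcal H_{r-1}$'', which is what Theorem~\ref{th-y-1} supplies, to ``$\zeta_r$ actually vanishes arbitrarily close to that point'', and the natural device is Rouch\'e's theorem in the single variable $s_2$. Inspecting \eqref{y-0}, the $j$-th summand of $F_{r-1}$ involves only $s_{r-j+1},\ldots,s_r$, so $s_2$ occurs in exactly one term, the one with $j=r-1$, and with coefficient $(-1)^2=1$; hence $F_{r-1}(s_2,\ldots,s_r)=\zeta_{r-1}(s_2,s_3,\ldots,s_r)+c(s_3,\ldots,s_r)$ with $c$ independent of $s_2$. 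Fix $(\rho_3,\ldots,\rho_r)$ to be the corresponding coordinates of the given solution of \eqref{y-1} (assumed to lie off the singular loci), and set $g(s_2)=F_{r-1}(s_2,\rho_3,\ldots,\rho_r)$. Then $g$ is holomorphic near $\rho_2$, satisfies $g(\rho_2)=0$, and is non-constant, since $\partial\zeta_{r-1}/\partial s_2\not\equiv 0$ on the domain of absolute convergence, so by analytic continuation $\zeta_{r-1}$ is non-constant in its first variable. In particular the zeros of $g$ are isolated and of finite order.

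\emph{Part (ii), the Rouch\'e and diagonal argument.}
For each small $\varepsilon_0>0$ with $g\neq 0$ on the circle $|s_2-\rho_2|=\varepsilon_0$, put $\eta=\min_{|s_2-\rho_2|=\varepsilon_0}|g(s_2)|>0$. Applying Theorem~\ref{th-y-1} with $D_{r-1}$ a small neighbourhood of $\{|s_2-\rho_2|\le\varepsilon_0\}\times\{(\rho_3,\ldots,\rho_r)\}$ chosen to avoid the relevant singularities, I obtain $\sigma^{*}$ such that $|\zeta_r(s_1,s_2,\rho_3,\ldots,\rho_r)-g(s_2)|<\eta\le|g(s_2)|$ whenever $\Re s_1\ge\sigma^{*}$ and $|s_2-\rho_2|=\varepsilon_0$. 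For $\Re s_1$ large, the $s_1$-dependent singular hyperplanes of $\zeta_r$ move off this disc, so $s_2\mapsto\zeta_r(s_1,s_2,\rho_3,\ldots,\rho_r)$ is holomorphic there, and Rouch\'e gives it at least one zero inside. Running this with $\varepsilon_0=\varepsilon_0(k)\downarrow 0$ and an accompanying sequence with $\Re s_1^{(k)}\to+\infty$, I obtain points $(s_1^{(k)},s_2^{(k)},\rho_3,\ldots,\rho_r)$ on the zero-set of $\zeta_r$ whose last $r-1$ coordinates converge to $(\rho_2,\ldots,\rho_r)$, which is the assertion.

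\emph{Main obstacle.}
The conceptual content of (ii) is the Rouch\'e step; the real care is needed, first, in establishing that $g$ is non-constant (so that points of $\mathcal H_{r-1}$ are isolated along the $s_2$-slice and the argument principle has content) and, second, in the bookkeeping with the singular loci of $\zeta_r$ — the $s_2$-only ones avoided by shrinking $\varepsilon_0$, the $s_1$-dependent ones pushed away by enlarging $\Re s_1$ — so that Rouch\'e is legitimately applied to a function holomorphic on an honest disc. I expect this second point to be the most delicate thing to write out carefully, together with verifying that Theorem~\ref{th-y-1} is applied with an admissible (singularity-free) region $D_{r-1}$.
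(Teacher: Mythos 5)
Your proof is correct and follows essentially the paper's strategy: part (i) is read off from Theorem \ref{th-y-1} with $\zeta_r=0$ plus a compactness/continuity argument, and part (ii) is a one-variable Rouch\'e argument applied to $\zeta_r$ restricted to a complex line through the given point of $\mathcal{H}_{r-1}$, using the uniform approximation $\zeta_r\approx F_{r-1}$.

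The one genuine difference is the choice of slicing variable. The paper fixes $\rho_2,\ldots,\rho_{r-1}$ and runs Rouch\'e in the last variable $s_r$, comparing $\zeta_r(s_1,\rho_2,\ldots,\rho_{r-1},s_r)$ with $F(s_r)=F_{r-1}(\rho_2,\ldots,\rho_{r-1},s_r)$ on a circle $|s_r-\rho_r|=\varepsilon$ where $m(\varepsilon)=\min|F|>0$ as in \eqref{y-8-5}; it also re-derives the approximation from the harmonic product formula \eqref{y-7} rather than citing Theorem \ref{th-y-1} directly. You instead fix $\rho_3,\ldots,\rho_r$ and vary $s_2$. Your choice buys a small but real advantage: since $s_2$ enters \eqref{y-0} only through the single term $\zeta_{r-1}(s_2,\ldots,s_r)$, the slice function $g(s_2)=\zeta_{r-1}(s_2,\rho_3,\ldots,\rho_r)+c$ is visibly non-constant (a Dirichlet series in $s_2$ with nonvanishing coefficients), so its zero at $\rho_2$ is isolated and Rouch\'e has content. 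The paper's slice $F(s_r)$ depends on $s_r$ through every term of \eqref{y-0}, and the paper simply asserts isolatedness of the zero without ruling out $F\equiv 0$ on the line; your version closes that small gap. Both proofs share the same remaining bookkeeping burden (keeping the disc and the large-$\Re s_1$ half-plane away from the singular loci), which you correctly flag as the delicate point.
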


The case $r=2$ of Theorem \ref{th-y-2} is exactly Proposition \ref{prop1} given 
in the previous section.

\section{Proofs of theorems}

We begin with the proof of Theorem \ref{th-y-0}.    

\begin{proof}[Proof of Theorem \ref{th-y-0}]
We divide the proof into several steps.

{\it Step 1}.$\;$
The multiple series
\begin{align}\label{y-2}
\zeta_r(s_1,\ldots,s_r)=\sum_{n_1=1}^{\infty}\cdots\sum_{n_r=1}^{\infty}n_1^{-s_1}
(n_1+n_2)^{-s_2}\cdots(n_1+\cdots+n_r)^{-s_r}
\end{align}
($r\geq 2$) is absolutely convergent when $\sigma_r$ is sufficiently large.    
Since $n_1+\cdots+n_r\geq r\geq 2$, all 
the terms on the right-hand side tend to 0 when $\sigma_r\to\infty$, uniformly in 
$(s_1,\ldots,s_{r-1})\in E_{r,r-1}$. 
Hence the assertion in the case $k=r$ follows.

{\it Step 2}.$\;$
However in the case $\sigma_k\to\infty$ $(2\leq k\leq r-1)$, the
above argument is not enough,
because if $\sigma_r$ is not large, then the expression \eqref{y-2} is not valid.
Therefore we first carry out the meromorphic continuation.    

We prove the continuation by induction.    Assume that
$\zeta_{r-1}(s_1,\ldots,s_{r-1})$ can be continued meromorphically to the whole
space $\mathbb{C}^{r-1}$.

Here, we apply the method of
using the Mellin-Barnes integral formula
\begin{align}\label{y-3}
(1+\lambda)^{-s}=\frac{1}{2\pi i}\int_{(c)}\frac{\Gamma(s+z)\Gamma(-z)}{\Gamma(s)}
\lambda^z dz,
\end{align}
where $s,\lambda\in\mathbb{C}$, $\Re s>0$, $\lambda\neq 0$, $|\arg\lambda|<\pi$, 
$-\Re s<c<0$, and the path of
integration is the vertical line $\Re z=c$. 
The following argument was done (in a more general form) in \cite{Mat03} \cite{MatJNT}.  
Assume, at first, that $\Re s_k>1$ for all $1\leq k\leq r$.
Then \eqref{y-2} is absolutely convergent, and an application of \eqref{y-3} yields
\begin{align}\label{y-4}
&\zeta_r(s_1,\ldots,s_r)=\frac{1}{2\pi i}\int_{(c)}\frac{\Gamma(s_r+z)\Gamma(-z)}{\Gamma(s_r)}\\
&\;\times\zeta_{r-1}(s_1,\ldots,s_{r-2},s_{r-1}+s_r+z)\zeta(-z)dz \notag
\end{align}
(see \cite[(12.3)]{Mat03}), where $-\Re s_r<c<-1$.   Then we shift the path of integration to 
the line $\Re z=M-\varepsilon$, where $M$ is a large positive integer and $\varepsilon$ is a
small positive number.    Counting the relevant residues, we obtain
\begin{align}\label{y-5}
&\zeta_r(s_1,\ldots,s_r)=\frac{1}{s_r-1}\zeta_{r-1}(s_1,\ldots,s_{r-2},s_{r-1}+s_r-1)\\
&+\sum_{l=0}^{M-1}\binom{-s_n}{l}\zeta_{r-1}(s_1,\ldots,s_{r-2},s_{r-1}+s_r+l)\zeta(-l)\notag\\
&+\frac{1}{2\pi i}\int_{(M-\varepsilon)}\frac{\Gamma(s_r+z)\Gamma(-z)}{\Gamma(s_r)}\notag\\
&\quad\times\zeta_{r-1}(s_1,\ldots,s_{r-2},s_{r-1}+s_r+z)\zeta(-z)dz \notag
\end{align}
(see \cite[(12.7)]{Mat03}).
Since the last integral is convergent in the wider region
\begin{align}\label{y-6}
\{(s_1,\ldots,s_r)\;|\;\Re(s_{r-j+1}+\cdots+s_r)>j-1-M+\varepsilon\; (1\leq j\leq r)\}
\end{align}
(see \cite[(12.9)]{Mat03}), we find that $\zeta_r(s_1,\ldots,s_r)$ can be continued to
this region.
Since $M$ is arbitrary, \eqref{y-5} implies the meromorphic
continuation of $\zeta_r(s_1,\ldots,s_r)$ to the whole space $\mathbb{C}^r$.

{\it Step 3}.$\;$
Let $2\leq k\leq r-1$,
and we prove the theorem in this case by induction on $r$.   
Note that the theorem in the case $r=2$ has been already proved in Step 1.
Assume the theorem is true for $r-1$,
and consider the situation:

$(*)$  
$\sigma_k=\Re s_k$ tends to $+\infty$, while the other variables remain
in the region $E_{k,r-1}$.  
  
Assume that $(s_1,\ldots,s_r)$ is under the situation $(*)$.   Then it is clearly 
included in the above region \eqref{y-6}
for sufficiently large $M$, so we can use the expression \eqref{y-5}.
Therefore our aim is to show that the right-hand side of \eqref{y-5} tends to 0
when $\sigma_k\to\infty$.

First note that the factor $(s_r-1)^{-1}$ remains bounded.    This is because
\eqref{sing_condition} especially implies
$(E_{k,r-1}\times\mathbb{C}_{(k)})\cap A_r^{(1)}=\emptyset$.

When $k=r-1$, we see that the real part of the last variable tends to $+\infty$ in all
the $\zeta_{r-1}(\cdot)$ factors on the right-hand side of \eqref{y-5}.
Therefore we can conclude that the right-hand side of \eqref{y-5} tends to 0 as
$\sigma_{r-1}\to\infty$, as in Step 1.

{\it Step 4}.$\;$
Assume $2\leq k\leq r-2$.   Then we can define
\begin{align*}
E_{k,r-1}^w=\left\{(s_1,\ldots,s_{k-1},s_{k+1},\ldots,s_{r-2},s_{r-1}+s_r+w)\;|\right.\\
\left.\quad(s_1,\ldots,s_{k-1},s_{k+1},\ldots,s_r)\in E_{k,r-1}\right\}\subset\mathbb{C}^{r-2},
\end{align*}
where $w=-1,0,1,2,\ldots,M-1$.
These $E_{k,r-1}^w$ are all closed subsets whose real parts are bounded.

We claim 
\begin{align}\label{claim}
(E_{k,r-1}^w\times\mathbb{C}_{(k)})\cap S_{k,r-1}=\emptyset
\qquad(w=-1,0,1,2,\ldots,M-1).
\end{align}

In fact, if 
\begin{align*}
\lefteqn{(s_1,\ldots,s_{r-2},s_{r-1}+s_r+w)}\\
&=((s_1,\ldots,s_{k-1},s_{k+1},\ldots,s_{r-2},s_{r-1}+s_r+w),s_k)\\
&\in (E_{k,r-1}^w\times\mathbb{C}_{(k)})\cap S_{k,r-1},
\end{align*}
then $(s_1,\ldots,s_{r-2},s_{r-1}+s_r+w)\in A_{r-1}^{(j,n)}$ for some $j\in\{2,\ldots,r-1-k\}$
and some $n\geq 0$, or $(s_1,\ldots,s_{r-2},s_{r-1}+s_r+w)\in A_{r-1}^{(1)}$.
The former case implies
$$
s_{r-1-j+1}+\cdots+s_{r-2}+(s_{r-1}+s_r+w)=j-n,
$$
so $s_{r-j}+\cdots+s_r=j-n-w$, and the latter case implies
$s_{r-1}+s_r=1-w$.
Therefore we conclude that $(s_1,\ldots,s_r)\in S_{k,r}$, which contradicts with the 
assumption \eqref{sing_condition}.    The claim \eqref{claim} follows.

Therefore we can apply the induction assumption to the factors
$$
\zeta_{r-1}(s_1,\ldots,s_{r-2},s_{r-1}+s_r+w)\qquad (w=-1,0,1,2,\ldots,M-1)
$$
on the right-hand side of \eqref{y-5} to find that these factors tends to 0
under the assumption $(*)$.

Lastly we consider the integral term.    We may choose $M$ in \eqref{y-5} sufficiently
large so that the factor 
$\zeta_{r-1}(s_1,\ldots,s_{r-2},s_{r-1}+s_r+z)$ in the integrand is convergent
absolutely.    Then
\begin{align*}
\lefteqn{\zeta_{r-1}(s_1,\ldots,s_{r-2},s_{r-1}+s_r+z)}\\
&=\sum_{n_1=1}^{\infty}\cdots\sum_{n_{r-1}=1}^{\infty}n_1^{-s_1}\cdots
(n_1+\cdots+n_{r-2})^{-s_{r-2}}(n_1+\cdots+n_{r-1})^{-s_{r-1}-s_r-z},
\end{align*}
and since $n_1+\cdots+n_k\geq k\geq 2$, this vanishes when $\sigma_k\to\infty$.
This completes the proof of the theorem.

%
\end{proof}

Next we proceed to the proof of Theorem \ref{th-y-1}.    The assertion (i) of 
Theorem \ref{th-y-2} is clearly
a special case of Theorem \ref{th-y-1}, that is, the case
$\zeta_r(s_1^{(m)},\ldots,s_r^{(m)})=0$ in \eqref{y-1-1}.

\begin{proof}[Proof of Theorem \ref{th-y-1}]    
When $r=1$, the assertion of the theorem is $\zeta(s_1^{(m)})\to 1$ as $\sigma_1^{(m)}\to +\infty$, 
which is obvious.
We assume that the theorem is true for
$r-1$, and prove the theorem by induction.    We use the following harmonic product formula.
\begin{align}\label{y-7}
&\zeta(s_1)\zeta_{r-1}(s_2,\ldots,s_r)\\
&\;=\sum_{n_1=1}^{\infty}n_1^{-s_1}\sum_{n_2<\cdots<n_r}n_2^{-s_2}\cdots n_r^{-s_r}\notag\\
&\;=\sum_{n_1<n_2<\cdots<n_r}+\sum_{n_1=n_2<n_3<\cdots<n_r}+\sum_{n_2<n_1<n_3<\cdots<n_r}\notag\\
&\quad+\sum_{n_2<n_1=n_3<\cdots<n_r}+\cdots+\sum_{n_2<\cdots<n_{r-1}<n_1<n_r}\notag\\
&\quad+\sum_{n_2<\cdots<n_{r-1}<n_1=n_r}+\sum_{n_2<\cdots<n_{r-1}<n_r<n_1}\notag\\
&\;=\zeta_r(s_1,s_2,\ldots,s_r)+\zeta_{r-1}(s_1+s_2,s_3,\ldots,s_r)
+\zeta_r(s_2,s_1,s_3,\ldots,s_r)\notag\\
&\quad+\zeta_{r-1}(s_2,s_1+s_3,\ldots,s_r)+\cdots+\zeta_r(s_2,\ldots,s_{r-1},s_1,s_r)\notag\\
&\quad+\zeta_{r-1}(s_2,\ldots,s_{r-1},s_1+s_r)+\zeta_r(s_2,\ldots,s_{r-1},s_r,s_1),\notag
\end{align}
which is first valid in the region $\Re s_k>1$ ($1\leq k\leq r$), but then is valid in the
whole space by meromorphic continuation.

Let $s_k=s_k^{(m)}$ ($1\leq k\leq r$) in \eqref{y-7}, and
consider the situation when $\sigma_1^{(m)}\to +\infty$ and
$(s_2^{(m)},\ldots,s_r^{(m)})$ remains in $D_{r-1}$.

The singularities appearing in the formula \eqref{y-7} consist of two types: included
in $S_{1,r}$, or some hyperplane whose defining equation includes $s_1$. 
Therefore by \eqref{sing_condition2} we find that those singularities are irrelevant
during our process $\sigma_1^{(m)}\to +\infty$.

Therefore $\zeta_{r-1}(s_2^{(m)},\ldots,s_r^{(m)})$ remains bounded.
Noting $\zeta(s_1^{(m)})\to 1$ we see that
there exists a large $M_1=M_1(\varepsilon)$ for which
\begin{align}\label{y-7-1}
|\zeta(s_1^{(m)})\zeta_{r-1}(s_2^{(m)},\ldots,s_r^{(m)})-\zeta_{r-1}(s_2^{(m)},\ldots,s_r^{(m)})|
<\varepsilon/3
\end{align}
holds for any $m\geq M_1$.

On the right-hand side of \eqref{y-7},
denote by $G(s_1,\ldots,s_r)$ the sum of all the terms, except for the first two terms.
Then $G(s_1^{(m)},\ldots,s_r^{(m)})$ tends to 0, in view of Theorem \ref{th-y-0}.
That is, there exists $M_2=M_2(\varepsilon)$ for which
\begin{align}\label{y-7-2}
|G(s_1^{(m)},\ldots,s_r^{(m)})|<\varepsilon/3
\end{align}
holds for any $m\geq M_2$.

We use the induction assumption to treat the second term on the right-hand side.
We can find $M_3=M_3(\varepsilon)$ for which
\begin{align}\label{y-7-3}
|\zeta_{r-1}(s_1^{(m)}+s_2^{(m)},s_3^{(m)},\ldots,s_r^{(m)})-F_{r-2}(s_3^{(m)},\ldots,s_r^{(m)})|
<\varepsilon/3
\end{align}
for any $m\geq M_3$.

Substituting \eqref{y-7-1}, \eqref{y-7-2} and \eqref{y-7-3} into \eqref{y-7} we find that
\begin{align*}
&|\zeta_r(s_1^{(m)},\ldots,s_r^{(m)})-
(\zeta_{r-1}(s_2^{(m)},\ldots,s_r^{(m)})
-F_{r-2}(s_3^{(m)},\ldots,s_r^{(m)}))|\\
&\qquad<\varepsilon/3+\varepsilon/3+\varepsilon/3,
\end{align*}
that is,
\begin{align*}
|\zeta_r(s_1^{(m)},\ldots,s_r^{(m)})-F_{r-1}(s_2^{(m)},\ldots,s_r^{(m)})|<\varepsilon.
\end{align*}    
The uniformity of the convergence follows from the assertion on the uniformity in Theorem
\ref{th-y-0}.
This completes the proof of Theorem \ref{th-y-1}, 
and hence the
part (i) of Theorem \ref{th-y-2}.
\end{proof}

\begin{rem}
(A generalization of Remark \ref{rem_trivial})
If $\Re s_r>1$, then \eqref{y-2} is valid.    In this case, when $\sigma_1\to +\infty$ and 
$(s_2^{(m)},\ldots,s_r^{(m)})$ tends to $(\rho_2,\ldots,\rho_r)$,
from \eqref{y-2} it is immediate that 
$\zeta_r(s_1^{(m)},\ldots,s_r^{(m)})$ tends to
\begin{align}\label{y-8}
\sum_{n_2=1}^{\infty}\cdots\sum_{n_r=1}^{\infty}(1+n_2)^{-\rho_2}\cdots(1+n_2+\cdots+n_r)^{-\rho_r}.
\end{align}
Therefore in the region $\Re s_r>1$, the two expressions $F_{r-1}(\rho_2,\ldots,\rho_r)$ 
and \eqref{y-8} should be
equal to each other.
In fact, putting $1+n_2=m_2$ in \eqref{y-8}, we see that \eqref{y-8} is equal to
\begin{align*}
&\sum_{m_2=2}^{\infty}\sum_{n_3=1}^{\infty}\cdots\sum_{n_r=1}^{\infty}
m_2^{-\rho_2}(m_2+n_3)^{-\rho_3}\cdots(m_2+n_3+\cdots+n_r)^{-\rho_r}\\
&=\zeta_{r-1}(\rho_2,\rho_3,\ldots,\rho_r)-\sum_{n_3=1}^{\infty}\cdots\sum_{n_r=1}^{\infty}
(1+n_3)^{-\rho_3}\cdots(1+n_3+\cdots+n_r)^{-\rho_r}.
\end{align*}
Next we put $1+n_3=m_3$ and argue similarly to obtain that the second sum on the right-hand side
is
$$
=\zeta_{r-2}(\rho_3,\ldots,\rho_r)-\sum_{n_4=1}^{\infty}\cdots\sum_{n_r=1}^{\infty}
(1+n_4)^{-\rho_4}\cdots(1+n_4+\cdots+n_r)^{-\rho_r}.
$$
Repeating this argument, we arrive at the expression $F_{r-1}(\rho_2,\ldots,\rho_r)$.
\end{rem}

Lastly we prove the part (ii) of Theorem \ref{th-y-2}.
Let $(\rho_2,\ldots,\rho_r)$ be any solution of \eqref{y-1}.
We fix $\rho_2,\ldots,\rho_{r-1}$, and regard 
$F_{r-1}(\rho_2,\ldots,\rho_{r-1},s_r)$ as a function of one complex variable $s_r$
(and we denote it by $F(s_r)$ for brevity).
Then $F(\rho_r)=0$, and since any zero of function of one complex variable is isolated, we can find
a small positive $\varepsilon_0$, with the properties that $F(s_r)$ is holomorphic in
$|s_r-\rho_r|<\varepsilon_0$ and $s_r=\rho_r$ is the only zero point in this region.
Choose $0<\varepsilon<\varepsilon_0$, and put
\begin{align}\label{y-8-5}
m(\varepsilon)=\min_{|s_r-\rho_r|=\varepsilon}|F(s_r)|>0.
\end{align}

Now, put $s_k=\rho_k$ ($2\leq k\leq r-1$) in \eqref{y-7}.    Also assume that
$|s_r-\rho_r|=\varepsilon$.    Then $|\zeta_{r-1}(\rho_2,\ldots,\rho_{r-1},s_r)|$ is bounded, 
so we have
\begin{align}\label{y-9}
|\zeta(s_1)\zeta_{r-1}(\rho_2,\ldots,\rho_{r-1},s_r)
-\zeta_{r-1}(\rho_2,\ldots,\rho_{r-1},s_r)|<m(\varepsilon)/3
\end{align}
if $\sigma_1>M_1(\varepsilon)$ with a sufficiently large $M_1(\varepsilon)$.

By Theorem \ref{th-y-0}, there exists
$M_2=M_2(\varepsilon)$ for which 
\begin{align}\label{y-10}
|G(s_1,\rho_2,\ldots,\rho_{r-1},s_r)|<m(\varepsilon)/3
\end{align}
for $\sigma_1>M_2(\varepsilon)$.

Lastly, by Theorem \ref{th-y-1} we see that there exists
$M_3=M_3(\varepsilon)$ for which
\begin{align}\label{y-11}
&\Bigl|\zeta_{r-1}(s_1+\rho_2,\rho_3,\ldots,\rho_{r-1},s_r)\Bigl.\\
&\bigl.-(\sum_{j=1}^{r-2}(-1)^{r-j}\zeta_j(\rho_{r-j+1},\ldots,\rho_{r-1},s_r)-(-1)^{r-1})\Bigr|
<m(\varepsilon)/3\notag
\end{align}
for $\sigma_1>M_3(\varepsilon)$.

Combining \eqref{y-7}, \eqref{y-9}, \eqref{y-10} and \eqref{y-11}, and noting
\begin{align*}
&\zeta_{r-1}(\rho_2,\ldots,\rho_{r-1},s_r)-
(\sum_{j=1}^{r-2}(-1)^{r-j}\zeta_j(\rho_{r-j+1},\ldots,\rho_{r-1},s_r)-(-1)^{r-1})\\
&\quad=F_{r-1}(\rho_2,\ldots,\rho_{r-1},s_r)=F(s_r),
\end{align*}
we obtain
\begin{align*}
&|\zeta_r(s_1,\rho_2,\ldots,\rho_{r-1},s_r)
-F(s_r)|\\
&\qquad<m(\varepsilon)/3+m(\varepsilon)/3+m(\varepsilon)/3=m(\varepsilon)
\end{align*}
when $\sigma_1>\max\{M_1(\varepsilon),M_2(\varepsilon),M_3(\varepsilon)\}$.
We fix such an $s_1$.
From the above inequality and \eqref{y-8-5} we obtain
$$
|\zeta_r(s_1,\rho_2,\ldots,\rho_{r-1},s_r)
-F(s_r)|
<|F(s_r)|
$$
for all $s_r$ satisfying $|s_r-\rho_r|=\varepsilon$.
Therefore by Rouch{\'e}'s theorem we find that 
the number of zeros of $\zeta_r(s_1,\rho_2,\ldots,\rho_{r-1},s_r)$ (as a function in $s_r$)
in the region $|s_r-\rho_r|<\varepsilon$ is equal to the number of zeros of $F(s_r)$ in the
same region, but the latter is 1.   This completes the proof of Theorem \ref{th-y-2} (ii).

\bigskip

\bigskip
\noindent
\parbox[t]{.48\textwidth}{
Kohji Matsumoto\\
Graduate School of Mathematics\\
Nagoya University\\
Chikusa-ku, Nagoya 464-8602, Japan\\
kohjimat@math.nagoya-u.ac.jp } \hfill
\parbox[t]{.48\textwidth}{
Mayumi Sh{\=o}ji\\
Department of Mathematical and Physical Sciences\\
Japan Women's University\\
Mejirodai, Bunkyoku, Tokyo 112-8681, Japan\\
shoji@fc.jwu.ac.jp }

\end{document}